\numberwithin{equation}{section} \theoremstyle{plain}
\newtheorem{thm}{Theorem}[section]
\newtheorem{prop}[thm]{Proposition}
\newtheorem{lem}[thm]{Lemma}
\newtheorem{cor}[thm]{Corollary}
\newtheorem{defn}[thm]{Definition}
\newtheorem{conj}[thm]{Conjecture}
\newtheorem{rem}[thm]{Remark}
\newtheorem*{acknow}{Acknowledgments}
\def\<{\langle}
\def\>{\rangle}
\def\({\left(}
\def\){\right)}
\def\[{\left[}
\def\]{\right]}
\def\tr{\mathop{\text{tr}}}
\title{Volume gap for minimal submanifolds in spheres}
\author[J.Q. Ge]{Jianquan Ge${}^{1}$}
\address{$^{1}$School of Mathematical Sciences, Laboratory of Mathematics and Complex Systems, Beijing Normal University, Beijing 100875, P.R. CHINA.}
\email{jqge@bnu.edu.cn}
\author[F.G. Li]{Fagui Li${}^{2,*}$}
\address{$^{2}$Frontier Interdisciplinary Domain, Beijing Institute of Technology, Zhuhai, Guangdong 519088, P. R. CHINA.}
\email{lifagui@bitzh.edu.cn}
\subjclass[2020]{53C42, 53C24.}
\date{}
\keywords{Volume gap, minimal submanifolds, Yau's conjecture.}
\thanks{$^{*}$ the corresponding author.}
\thanks{J. Q. Ge is partially supported by NSFC  (No.12171037, 12571049)  and the Fundamental Research Funds for the Central Universities.}
\thanks{F. G. Li is partially supported by  NSFC (No. 12271040, 12501061) and Research Start up Funding of Beijing Institute of Technology (No. 5640011253301).}
\begin{document}
\maketitle

\begin{abstract}
For a closed minimal submanifold $f:M^n\looparrowright \mathbb{S}^{N}$ in the unit sphere $(n<N)$, we prove
$$
{\rm Vol}(M^n) 
\geq
 \frac{n+1}{n+2}
\int_{M}\left( 1+\varphi_{p}^2\right) 
\geq 
m{\rm Vol}(\mathbb{S}^{n}),$$
where $\varphi_{p}(x):=\langle f(x),p\rangle$ is the height function in direction $p\in f(M)$, $m$ denotes the multiplicity of $p\in f(M)$ and ${\rm Vol}$  denotes the Riemannian volume functional, and each equality holds if and only if $M$ is totally geodesic. 
As an application, if the volume of $M^n$ is less than or equal to the volume of any $n$-dimensional minimal Clifford torus, then $M^n$ must be embedded, verifying the non-embedded case of Yau's conjecture. In addition, we also get volume gaps for minimal hypersurfaces with constant scalar curvature, improving Cheng-Li-Yau's classical volume gap in this case. Some other volume gaps and related pinching rigidities are also obtained.
\end{abstract}

\section{Introduction}
In  1984, Cheng-Li-Yau \cite{Cheng Li Yau 1984 Heat equations} proved  that \emph{if $M^n$ is a closed minimal immersed submanifold in the unit sphere $\mathbb{S}^{N}$ $\left( N=n+l\geq n+1\right) $ and $M^n$ is of maximal dimension
$($$M^n$ does not lie on any hyperplane of $\mathbb{R}^{N+1}$$)$,
then 
the volume  of  $M^n$ satisfies}
  $$
  {\rm Vol}(M^n)>\left( 1+\frac{2l+1}{B_n}\right) {\rm Vol}(\mathbb{S}^n),
  $$
where $B_n=2n+3+2\exp \left( 2nC_n\right)$,
$C_n= \frac{1}{2}n^{n/2}e\Gamma(n/2,1)$  and $\Gamma\left(\frac{n}{2},1\right)=\int_{1}^\infty e^{-t}t^{\frac{n}{2}-1}~dt$.	 
This means that there is a volume gap for minimal submanifolds in spheres. Regarding the second smallest volume,
Yau proposed the following well-known conjecture in his Problem Section \cite{Yau  1992}:
 \begin{conj}[\textbf{Yau's Conjecture \cite{Yau  1992}}] \label{conj  Yau 1982}
The volume of one of the minimal Clifford torus $(M_{k,n-k}=S^{k}(\sqrt{\frac{k}{n}})\times S^{n-k}(\sqrt{\frac{n-k}{n}}),\ 1\leq k\leq n-1)$  gives the lowest value of volume among all non-totally geodesic closed minimal hypersurfaces of $\mathbb{S}^{n+1}$.
 \end{conj}
 For $n=2$,
 Conjecture \ref{conj  Yau 1982}  (also called the Solomon-Yau conjecture \cite{Ge19,IW15}) is true, due to the following two results. On the one hand, Li and Yau \cite{Li and Yau 1982} proved that \emph{let $f: M^2\looparrowright    \mathbb{S}^{N} $ be a minimal immersion of a closed surface into the unit sphere  $\mathbb{S}^{N}$.
    If there exists a point such that its preimage set  consists of $m$ distinct points in $M^2$, then ${\rm Vol}(M^2)\geq
    m {\rm Vol}(\mathbb{S}^2)= 4m\pi$}.
    Recently, in a previous version of this paper \cite{Ge Li 2022 Solomon-Yau conj} we partially generalized this inequality to any dimension which is sufficient to verify Yau's conjecture in the non-embedded case.
    Nguyen \cite{Nguyen  2023} proved $
        {\rm Vol}(M^n)
        \geq
        m{\rm Vol}(\mathbb{S}^{n})$ finally. 
     More generally,  Li and Yau \cite{Li and Yau 1982} proved both of Yau's conjecture and Willmore's conjecture for non-embedded case in dimension $2$.
On the other hand, Calabi \cite{Calabi 1967} proved that \emph{if ${S}^{2}$ is minimally immersed in $\mathbb{S}^{3}$, then ${S}^{2}$ is an equator (i.e,  totally geodesic).}
 Marques and Neves \cite{Marques and Neves 2014} completely proved  Willmore's conjecture and  showed that \emph{any non-totally geodesic closed minimal embedded surface in $\mathbb{S}^3$ has volume greater than or equal to $2\pi^2$, the volume of the Clifford torus $M_{1,1}$. The equality holds only for the Clifford torus $M_{1,1}$.} This completes the proof for embedded case in dimension $2$.
 Another related rigidity result is Lawson's conjecture (proved by Brendle \cite{Brendle S 2013}), i.e., \emph{the only embedded minimal torus in $\mathbb{S}^3$ is the Clifford torus.}
 For more details of minimal surfaces,  please see   \cite{Andrews Ben Li Haizhong 2015,Brendle S 2013 survey of recent results,Osserman Robert 1980}, etc.

  For $n\geq 3$, among minimal rotational hypersurfaces, Yau's conjecture was verified for $2\leq n\leq 100$ by Perdomo and Wei \cite{Perdomo and  Wei 2015}, and for all dimensions by  Cheng-Wei-Zeng \cite{Cheng Qing Ming Guoxin Wei and Yuting Zeng 2019} and Cheng-Lai-Wei \cite{Cheng Qing Ming  Junqi Lai and Guoxin Wei 2024}, respectively. Remarkably, in the asymptotic sense, Ilmanen and White \cite{IW15} proved 
 Yau's conjecture  in the class of \textit{topologically nontrivial} (at least one of the components of $\mathbb{S}^{n+1}\setminus M^n$ is not contractible) closed minimal embedded hypersurfaces whose hypercones are area-minimizing in $\mathbb{R}^{n+2}$. Namely, they showed $ {\rm Vol}(M^n)>\sqrt{2}{\rm Vol}(\mathbb{S}^n)$, where $\sqrt{2}=\lim\limits_{k\rightarrow\infty}\frac{{\rm Vol}(M_{k,k})}{{\rm Vol}(\mathbb{S}^n)}=\lim\limits_{k\rightarrow\infty}\frac{{\rm Vol}(M_{k,k+1})}{{\rm Vol}(\mathbb{S}^n)}$ for $n=2k$ or $n=2k+1$ respectively.
Recently, Viana \cite{Celso Viana 2023}
confirmed Yau's conjecture in the class of minimal hypersurfaces separating $\mathbb{S}^n$ in two connected regions which are both antipodal invariant.

 In this paper,  we study the  monotonicity formula of   Choe and  Gulliver \cite{Choe and Gulliver 1992} (see Proposition \ref{prop The monotonicity formula for minimal submanifolds}) for any minimal submanifold in $\mathbb{S}^{N}$ similar to Euclidean space. For the classical monotonicity formula in Euclidean space, please see the excellent and elegant  survey  by Brendle \cite{Brendle S 2020}. As  applications,  we prove  the following results. The first one shows that the volume lower bound $m{\rm Vol}(\mathbb{S}^{n})$ would not be attained once the minimal submanifold $M^n$ is not totally geodesic.
\begin{thm}\label{theorem  gap  results of volume in the m preimage varphi p2}
Let $f: M^n\looparrowright    \mathbb{S}^{N} $ be a   closed minimal  immersed submanifold  in  $\mathbb{S}^{N}$. If there exists a point $p\in f(M)$ such that its preimage set  consists of $m$ distinct points in $M^n$, then
$$
{\rm Vol}(M^n) 
\geq
 \frac{n+1}{n+2}
\int_{M}\left( 1+\varphi_{p}^2\right) 
\geq 
m{\rm Vol}(\mathbb{S}^{n}),$$
where each equality holds if and only if $M^n$ is totally geodesic and   $\varphi_{p}$ is the height function (see (\ref{height functions varphi})).
\end{thm}


\begin{cor}\label{thm 2 gap  results of volume in the introduction must be embedded}
 Let $M^n$ be a closed minimal  immersed submanifold  in  $\mathbb{S}^{N}$. If
 $$
 {\rm Vol}(M^n)
<
2{\rm Vol}(\mathbb{S}^{n}),$$
then $M^n$ must be embedded.
\end{cor}
\begin{rem}
Corollary \ref{thm 2 gap  results of volume in the introduction must be embedded} shows that  Yau's conjecture  is correct for non-embedded hypersurfaces in $\mathbb{S}^{n+1}$.
\end{rem}
  \begin{cor}\label{theorem  gap  results of volume nonembedded hypersurface}
  Let $f: M^n\looparrowright    \mathbb{S}^{n+1} $ be a    closed minimal  immersed hypersurface in  $\mathbb{S}^{n+1}$. If there exists a point $p\in f(M)$ such that its preimage set  consists of $m$ distinct points in $M^n$, then
\begin{equation*} 
  {\rm Vol}(M^n) 
  \geq 
  m{\rm Vol}(\mathbb{S}^{n})+
   \frac{1}{n+2}
   \int_{M} \psi_{p}^2,
\end{equation*}    
  where the equality holds if and only if $M^n$ is totally geodesic and  $\psi_{p}$ is the normal  height function (see (\ref{height functions psi})).
  \end{cor}
Let $S=\|A\|^2$ denote the squared length of second fundamental form $A$  of $M^n$,  $ S_{\max}=\sup_{p\in M^n}S(p)$ and $ S_{\min}=\inf_{p\in M^n}S(p)$, we have
  \begin{cor}\label{theorem  gap  results of volume nonembedded IE n dim to n+1 sphere}
  Let $f: M^n\looparrowright    \mathbb{S}^{n+1} $ be a   closed, non-totally-geodesic, minimal Integral-Einstein (see Definition  \ref{defintion integral Eins manifold})  immersed  hypersurface  in  $\mathbb{S}^{n+1}$. If there exists a point $p\in f(M)$ such that its preimage set  consists of $m$ distinct points in $M^n$, then
  $$
  {\rm Vol}(M^n) 
  \geq 
  \frac{(n+1)(n+2)S_{\max}+(n+2)S_{\min}}{(n+1)(n+2)S_{\max}+(n+1)S_{\min}}
m
  {\rm Vol}(\mathbb{S}^{n}).$$
  \end{cor}
  In particular, for $n=2$, we have
  \begin{cor}\label{theorem  gap  results of volume nonembedded 2 dim to 3 sphere}
  Let $f: M^2\looparrowright    \mathbb{S}^{3} $ be a   closed, non-totally-geodesic, minimal  immersed  surface  in  $\mathbb{S}^{3}$. If there exists a point $p\in f(M)$ such that its preimage set  consists of $m$ distinct points in $M^n$, then
  $$
  {\rm Vol}(M^2) 
  \geq 
  \frac{12S_{\max}+4S_{\min}}{12S_{\max}+3S_{\min}}
  m{\rm Vol}(\mathbb{S}^{2}).$$
  \end{cor}
 In the following we give volume gaps that fit for hypersurfaces with some curvature conditions as in the famous Chern conjecture (cf. \cite{Chang 1993,Peng and Terng1 1983,M Scherfner S Weiss and  Yau 2012,TY20}, etc.), which is still open for $n>3$ and states that \emph{a closed  minimal immersed hypersurface  in $\mathbb{S}^{n+1}$ with constant scalar curvature  is isoparametric.}

\begin{thm}\label{theorem  gap  results of volume for hypersurface constant S}
 Let $M^n$ be a closed, non-totally-geodesic, minimal immersed  hypersurface  with constant scalar curvature  in  $\mathbb{S}^{n+1}$. Then
   $$
   {\rm Vol}(M^n)
  \geq
  \frac{2n(n+2)}{2n^2+4n-1}
   {\rm Vol}(\mathbb{S}^{n}).$$
    \end{thm}

\begin{rem}
This improves the volume gap of Cheng-Li-Yau \cite{Cheng Li Yau 1984 Heat equations} under the additional assumption of constant scalar curvature. In fact, Theorem \ref{theorem  gap  results of volume for hypersurface constant S} is a special case of Theorem \ref{theorem  gap  results of volume for hypersurface  non-constant S}, where the case of non-constant scalar curvature is also considered. As an application, a new pinching rigidity result is obtained in Corollary \ref{corollary gap  results of volume for hypersurface  non-constant S new}.
\end{rem}

\begin{thm}\label{theorem  gap  results of volume for hypersurface  constant S f3 embedded}
 Let $M^n$ be a closed, non-totally-geodesic, minimal immersed  hypersurface  in  $\mathbb{S}^{n+1}$ with constant scalar curvature. If the third mean curvature is constant (or $M^n$ is Integral-Einstein, see Definition  \ref{defintion integral Eins manifold}) and  there exists a point $p\in f(M)$ such that its preimage set  consists of $m$ distinct points in $M^n$, then
 $$
 {\rm Vol}(M^n)
\geq 
\frac{m(n+2)^2}{(n+1)(n+3)}{\rm Vol}(\mathbb{S}^{n})>m{\rm Vol}(\mathbb{S}^{n}).$$
  \end{thm}

The rest of the paper is organized as follows. In section \ref{sect2} we introduce the monotonicity formula for minimal submanifolds in spheres and some preliminary results.
In section \ref{sect3} we prove Theorem \ref{theorem  gap  results of volume in the m preimage varphi p2} and its corollaries. Section \ref{sect4} presents volume gaps for minimal hypersurfaces with additional curvature assumptions (Theorems \ref{theorem  gap  results of volume for hypersurface constant S}, \ref{theorem  gap  results of volume for hypersurface  constant S f3 embedded}, \ref{theorem  gap  results of volume for hypersurface  constant S f3}, \ref{theorem  gap  results of volume for hypersurface  non-constant S antipodal map}, \ref{theorem  gap  results of volume for hypersurface  non-constant S} and Corollaries \ref{corollary gap  results of volume for hypersurface  non-constant SUX  antipodal map}, \ref{corollary gap  results of volume for hypersurface  non-constant SUX}), and give applications to pinching rigidity results (Corollaries \ref{corollary gap  results of volume for hypersurface  non-constant S antipodal}, \ref{corollary gap  results of volume for hypersurface  non-constant S new}). In the last section \ref{sect5}, we 
study volume difference for the two parts of minimal submanifold in the two hemispheres divided by an equator (Theorem \ref{theorem Volume difference} and Corollaries \ref{cor Volume gap of minimal hypersurface by Volume difference}, \ref{cor Volume of minimal hypersurface partitioning}, \ref{cor Volume of minimal S constant IE hypersurface partitioning}).


\section{The monotonicity formula for minimal submanifolds in spheres}\label{sect2}
Let $f: M^n\looparrowright    \mathbb{S}^{N} \subset \mathbb{R}^{N+1}$ be a closed minimal immersed submanifold  in the unit sphere  $\mathbb{S}^{N}$. For any fixed unit vector $a \in \mathbb{S}^{N}$, we consider the height function on $M^n$,
\begin{equation}\label{height functions varphi}
\varphi_a(x) = \langle f(x),a \rangle.
\end{equation}
Then we have the following basic properties.
\begin{prop} \cite{Ge Li 2020, Takahashi 1966} \label{prop funda}
For all $a\in \mathbb{S}^{N}$, we have $-1\leq \varphi_a\leq 1$, and
$$\begin{array}{lll}
\nabla \varphi_a=a^{\rm T},&
\Delta \varphi_a=-n\varphi_a,& \int_M \varphi_a=0,
\end{array}$$
 where  $a^{\rm T}\in \Gamma(TM)$ denotes the tangent component of $a$ along $M^n$.
\end{prop}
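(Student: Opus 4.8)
The plan is to verify the four assertions one at a time, using only that $f$ is an isometric immersion of $M^n$ into $\mathbb{R}^{N+1}$ with image in the unit sphere $\mathbb{S}^N$ and that $M^n$ is minimal in $\mathbb{S}^N$. The bound $-1\le\varphi_a\le 1$ is just Cauchy--Schwarz: $|\varphi_a(x)|=|\<f(x),a\>|\le|f(x)|\,|a|=1$, since $f(x),a\in\mathbb{S}^N$. For the gradient, fix $x\in M^n$ and $X\in T_xM$; differentiating $\varphi_a$ along $X$ and identifying $X$ with $df(X)\in\mathbb{R}^{N+1}$ gives $X(\varphi_a)=\<df(X),a\>=\<X,a\>$ (the term involving the derivative of $a$ drops out because $a$ is constant). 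Decomposing $a=a^{\rm T}+a^\perp$ into components tangent and normal to $M^n$ inside $\mathbb{R}^{N+1}$ and using $X\perp a^\perp$, we obtain $X(\varphi_a)=\<X,a^{\rm T}\>$ for every tangent $X$, which is exactly the statement $\nabla\varphi_a=a^{\rm T}$.

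The Laplacian is the one step with genuine content, namely the Takahashi-type identity $\Delta f=-nf$ read componentwise in $\mathbb{R}^{N+1}$. I would obtain it as follows: for an isometric immersion into $\mathbb{R}^{N+1}$ one has $\Delta f=\vec{H}$, the (unnormalized) mean curvature vector of $M^n$ in $\mathbb{R}^{N+1}$; composing the immersions $M^n\hookrightarrow\mathbb{S}^N\hookrightarrow\mathbb{R}^{N+1}$ and using that the second fundamental form of $\mathbb{S}^N$ in $\mathbb{R}^{N+1}$ equals $-\<\cdot,\cdot\>f$, one finds $\vec{H}=\vec{H}_{\mathbb{S}^N}-nf$; and minimality of $M^n$ in $\mathbb{S}^N$ forces $\vec{H}_{\mathbb{S}^N}=0$. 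Hence $\Delta f=-nf$. Since $a$ is a constant vector, $\Delta$ commutes with pairing against $a$, so $\Delta\varphi_a=\<\Delta f,a\>=-n\<f,a\>=-n\varphi_a$.

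Finally, integrating the identity $\Delta\varphi_a=-n\varphi_a$ over the closed manifold $M^n$ and invoking the divergence theorem gives $0=\int_M\Delta\varphi_a=-n\int_M\varphi_a$, whence $\int_M\varphi_a=0$. The only point requiring any care is the curvature decomposition $\vec{H}=\vec{H}_{\mathbb{S}^N}-nf$, i.e. correctly accounting for the bending of $\mathbb{S}^N$ inside $\mathbb{R}^{N+1}$ via the Gauss formula for the tower of immersions; everything else is elementary, and the statement is in any case classical, going back to Takahashi.
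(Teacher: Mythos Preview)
Your proof is correct and is precisely the classical argument behind this fact. The paper itself does not supply a proof of Proposition~\ref{prop funda}; it simply states the result with citations to \cite{Ge Li 2020, Takahashi 1966}, so there is nothing to compare beyond noting that your write-up is exactly the standard Takahashi computation those references contain.
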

In particular, if $M^n$ is a minimal hypersurface in   $\mathbb{S}^{n+1}$ and denote by $\nu(x)$ the (local) unit normal vector field of $M^n$, the (local) normal height function   are defined as
\begin{equation}\label{height functions psi}
  \psi_a (x)=\langle \nu,a \rangle
\end{equation}
for any fixed unit vector $a \in \mathbb{S}^{N}$.  There are many applications
of minimal hypersurfaces in spheres by using the height functions (cf. \cite{Ge Li 2020,Ge Li 2021 Perdomo conjecture}, etc.), recently.

\begin{prop} \cite{Ge Li 2020,  Nomizu and Smyth 1969} \label{prop funda psi}
For all $a\in \mathbb{S}^{n+1}$, we have
$$\begin{array}{lll}
\nabla \psi_a =-A(a^{\rm T}), & \Delta \psi_a = -S\psi_a,
\end{array}$$
 where  $A$ is the shape operator with respect to $\nu$ (i.e., $A(X)=-\overline\nabla_X\nu$) and  $S=\|A\|^2={\tr} \left( AA^t\right) $.
\end{prop}

 In the following monotonicity formula for minimal submanifolds, our notations are different from that of Choe and  Gulliver \cite{Choe and Gulliver 1992}, therefore we give here a simple proof. Denote the level sets by $$ {\left\lbrace \varphi_a= t\right\rbrace }={\left \lbrace x\in M^n: \varphi_a(x)= t\right\rbrace },\ \  {\left\lbrace s\leq \varphi_a\leq t\right\rbrace }={\left \lbrace x\in M^n: s\leq  \varphi_a(x)\leq t\right\rbrace }.
$$
In the subsequent proof, we need the following results.
\begin{prop}\label{prop The monotonicity formula for minimal submanifolds}\cite{Choe and Gulliver 1992}
For any fixed unit vector $a \in \mathbb{S}^{N}$, if $M\not\subset \left\lbrace\varphi_a=0\right\rbrace$, then the function
$$
r\longmapsto \frac{ \int_{\left\lbrace \varphi_a\geq r\right\rbrace }
\varphi_a}{\left(1-r^2\right)^{\frac{n}{2}} }$$
is
 monotone increasing for $-1<r\leq 0$ and
 monotone decreasing for $0<r<1$.
 \end{prop}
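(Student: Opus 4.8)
The plan is to treat $\varphi_a$ as a Morse-type height function and compute, for the quantity
$$I(r)=\int_{\{\varphi_a\ge r\}}\varphi_a,$$
the derivative $I'(r)$ by the coarea formula, then compare it with the derivative of the denominator $(1-r^2)^{n/2}$ and show the sign of $\bigl(I(r)(1-r^2)^{-n/2}\bigr)'$ is as claimed. First I would record, from Proposition \ref{prop funda}, the two ingredients that make the sphere case work just like the Euclidean one: $\Delta\varphi_a=-n\varphi_a$ and $|\nabla\varphi_a|^2=|a^{\rm T}|^2=|a|^2-\langle a,f\rangle^2-|a^{\perp_M}\text{(normal part)}|^2$. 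Since $M^n$ is minimal in $\mathbb{S}^N$, the only normal directions contributing to $a$ are the position vector $f$ and the normal bundle of $M$ in $\mathbb{S}^N$; using minimality one gets the Laplacian of $\varphi_a^2$ or directly the divergence identity $\operatorname{div}\bigl(\varphi_a\nabla\varphi_a\bigr)=|\nabla\varphi_a|^2-n\varphi_a^2$, which integrates nicely over superlevel sets.

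The main computation: by the coarea formula,
$$I(r)=\int_r^1 t\Bigl(\int_{\{\varphi_a=t\}}\frac{d\mathcal H^{n-1}}{|\nabla\varphi_a|}\Bigr)\,dt,
\qquad
I'(r)=-\,r\int_{\{\varphi_a=r\}}\frac{d\mathcal H^{n-1}}{|\nabla\varphi_a|}.$$
On the other hand, applying the divergence theorem to $\varphi_a\nabla\varphi_a$ on the domain $\{\varphi_a\ge r\}$, whose boundary is $\{\varphi_a=r\}$ with outward conormal $-\nabla\varphi_a/|\nabla\varphi_a|$, gives
$$-\,r\int_{\{\varphi_a=r\}}|\nabla\varphi_a|\,d\mathcal H^{n-1}
=\int_{\{\varphi_a\ge r\}}\bigl(|\nabla\varphi_a|^2-n\varphi_a^2\bigr).$$
The key point is then to control $|\nabla\varphi_a|^2$ from below on the level set in terms of $1-r^2$. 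Here one uses $|\nabla\varphi_a|^2=1-\varphi_a^2-\sum_\alpha\langle a,e_\alpha\rangle^2$ where $e_\alpha$ runs over an orthonormal normal frame of $M$ in $\mathbb{S}^N$; on $\{\varphi_a=r\}$ this is $\le 1-r^2$, and combined with the Cauchy–Schwarz inequality relating $\int|\nabla\varphi_a|$ and $\int 1/|\nabla\varphi_a|$ on the level set, one deduces
$$\Bigl(\int_{\{\varphi_a=r\}}\frac{d\mathcal H^{n-1}}{|\nabla\varphi_a|}\Bigr)\ge \frac{1}{1-r^2}\,\mathcal H^{n-1}(\{\varphi_a=r\}),$$
or more efficiently one argues directly with the two integral identities above. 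Differentiating $\log\bigl(I(r)(1-r^2)^{-n/2}\bigr)$ and substituting $I'(r)$ reduces the monotonicity to the inequality
$$\frac{I'(r)}{I(r)}\ \gtrless\ \frac{nr}{1-r^2},$$
with the direction of the inequality dictated by the sign of $r$; unwinding this through the divergence identity is exactly where the bound $|\nabla\varphi_a|^2\le 1-r^2$ on the level set is consumed.

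The step I expect to be the main obstacle is the rigorous handling of the level sets $\{\varphi_a=r\}$: $\varphi_a$ need not be a Morse function, so for a measure-zero set of $r$ the level set may be singular and the coarea/divergence manipulations need Sard's theorem plus an approximation or a limiting argument to extend the differential inequality from regular values to all $r\in(-1,1)\setminus\{0\}$, and then continuity of $I(r)$ to conclude monotonicity on the closed-at-$0$ intervals. A secondary technical point is justifying the boundary term evaluation when $\{\varphi_a=r\}$ meets the maximum/minimum set of $\varphi_a$; since $\Delta\varphi_a=-n\varphi_a$ forces these extrema to be attained and $\varphi_a$ to be real-analytic along $M$ (being a restriction of a linear function composed with a minimal, hence real-analytic, immersion), the singular set is a finite union of submanifolds of codimension $\ge 1$ and contributes nothing to the integrals, so the formal computation is in fact valid for every $r$. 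Once the differential inequality holds a.e. and both sides are continuous, integrating yields the stated monotonicity, with equality-discussion deferred (it is not needed for Proposition \ref{prop The monotonicity formula for minimal submanifolds} as stated).
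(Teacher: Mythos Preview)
Your proposal is correct and follows essentially the same route as the paper: coarea to express $I'(r)$, the divergence theorem together with $\Delta\varphi_a=-n\varphi_a$ to get $\int_{\{\varphi_a=r\}}|\nabla\varphi_a|=nI(r)$, and the pointwise bound $|\nabla\varphi_a|^2\le 1-\varphi_a^2$ on the level set, combined into $I'(r)/I(r)\le -nr/(1-r^2)$ for $0<r<1$ (and the reverse for $-1<r<0$). The detour through $\operatorname{div}(\varphi_a\nabla\varphi_a)$ and Cauchy--Schwarz is unnecessary; as you yourself note, the ``more efficient'' path using only the two identities $I'(r)=-r\int_{\{\varphi_a=r\}}|\nabla\varphi_a|^{-1}$ and $nI(r)=\int_{\{\varphi_a=r\}}|\nabla\varphi_a|$ together with $|\nabla\varphi_a|^{-1}\ge |\nabla\varphi_a|/(1-r^2)$ pointwise is exactly what the paper does (written there in integrated form over $[s,r]$ before differentiating). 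Your regularity caveats about Sard's theorem and singular level sets are well taken and more careful than the paper, which simply applies the coarea and divergence formulas without comment.
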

\begin{proof}
By Proposition \ref{prop funda}, $$\nabla \varphi_a=a^{\rm T},\ \ \Delta \varphi_a=-n\varphi_a.$$
Due to  the divergence theorem and
\begin{equation}\label{equation a2 psi2 vaephi2}
|a^{\rm T}|^2+\varphi_a^2\leq 1,
\end{equation}
one has for all $-1<t<1$,
\begin{equation}\label{equation Stokess formula to varphia}
\int_{\left\lbrace \varphi_a\geq t\right\rbrace }
\varphi_a=
\int_{\left\lbrace \varphi_a= t\right\rbrace}\frac{|a^{\rm T}|}{n}\leq
\int_{\left\lbrace \varphi_a= t\right\rbrace}\frac{\sqrt{1-\varphi_a^2}}{n}=
\int_{\left\lbrace \varphi_a= t\right\rbrace}\frac{\sqrt{1-t^2}}{n}.
\end{equation}
For all $0 < s\leq r< 1$, by the    co-area formula and (\ref{equation a2 psi2 vaephi2}, \ref{equation Stokess formula to varphia}), we obtain
\begin{equation}\label{equation 1 leqr to geqr varphia}
\begin{aligned}
\int_{\left\lbrace s\leq\varphi_a \leq r\right\rbrace }
\varphi_a
&=
\int_s^r dt
\int_{\left\lbrace \varphi_a=t\right\rbrace }
\frac{\varphi_a}{|a^{\rm T}|}
\geq
\int_s^r dt
\int_{\left\lbrace \varphi_a=t\right\rbrace }
\frac{\varphi_a}{\sqrt{1-\varphi_a^2}}\\
&=
\int_s^r dt
\int_{\left\lbrace \varphi_a=t\right\rbrace }
\frac{t}{\sqrt{1-t^2}}\\
&\geq
\int_s^r dt
\int_{\left\lbrace \varphi_a\geq t\right\rbrace }
\frac{nt}{{1-t^2}}
\varphi_a.
\end{aligned}
\end{equation}
Thus, by (\ref{equation 1 leqr to geqr varphia})
\begin{equation}\label{inequation varpsi sr geq}
\frac{d\int_{\left\lbrace 0\leq \varphi_a \leq r\right\rbrace }
\varphi_a}
{dr}
=\lim_{s\to {r}}
\frac{\int_{\left\lbrace 0\leq \varphi_a \leq r\right\rbrace }
\varphi_a-\int_{\left\lbrace 0\leq \varphi_a \leq s\right\rbrace }
\varphi_a}
{r-s}
\geq
\frac{nr}{{1-r^2}}
\int_{\left\lbrace \varphi_a\geq r\right\rbrace }
\varphi_a.
\end{equation}
Since (\ref{inequation varpsi sr geq}) and
\begin{equation*}
\int_{\left\lbrace \varphi_a\geq 0\right\rbrace  }
\varphi_a=
\int_{\left\lbrace 0 \leq \varphi_a \leq r\right\rbrace }
\varphi_a+\int_{\left\lbrace \varphi_a \geq r\right\rbrace }
\varphi_a,
\end{equation*}
we have
\begin{equation*}
\frac{d \mathrm{In} \int_{\left\lbrace \varphi_a \geq r\right\rbrace }
\varphi_a}
{dr}\leq -
\frac{nr}{{1-r^2}}.
\end{equation*}
Hence, for $0< s_1\leq s_2< 1$, one has
\begin{equation*}
\int_{s_1}^{s_2}
d\mathrm{In} \int_{\left\lbrace \varphi_a \geq r\right\rbrace }
\varphi_a
\leq -
\int_{s_1}^{s_2}\frac{nr}{{1-r^2}}dr=
\frac{n}{2}\mathrm{ln}\left( \frac{1-s_2^2}{1-s_1^2}\right),
\end{equation*}
and
$$
\frac{ \int_{\left\lbrace \varphi_a\geq s_1\right\rbrace }
\varphi_a}{\left(1-s_1^2\right)^{\frac{n}{2}} }
\geq
\frac{ \int_{\left\lbrace \varphi_a\geq s_2\right\rbrace }
\varphi_a}{\left(1-s_2^2\right)^{\frac{n}{2}} }.
$$
This shows the monotonicity for $0<r<1$.

For $-1<r\leq 0$, we just need to change $\varphi_a$ to $-\varphi_a$. Similarly by the     co-area formula and (\ref{equation a2 psi2 vaephi2}, \ref{equation Stokess formula to varphia}), 
 for all $-1< s\leq r< 0$, we obtain
\begin{equation*}\label{equation 1 leqr to geqr varphia -1 to 0}
\begin{aligned}
\int_{\left\lbrace s\leq\varphi_a \leq r\right\rbrace }
-\varphi_a
&=
\int_s^r dt
\int_{\left\lbrace \varphi_a=t\right\rbrace }
\frac{-\varphi_a}{|a^{\rm T}|}
&\geq
\int_s^r dt
\int_{\left\lbrace \varphi_a\geq t\right\rbrace }
\frac{-nt}{{1-t^2}}
\varphi_a.
\end{aligned}
\end{equation*}
Hence
\begin{equation*}
\frac{d\mathrm{In} \int_{\left\lbrace \varphi_a \geq r\right\rbrace }
\varphi_a}
{dr}\geq -
\frac{nr}{{1-r^2}},
\end{equation*}
where $\int_{\left\lbrace \varphi_a \geq r\right\rbrace }\varphi_a=-\int_{\left\lbrace \varphi_a \leq r\right\rbrace }\varphi_a>0$ for $r>\min \varphi_a$.
Thus, by integrating the above inequality  we have the monotonicity:  for $-1< s_1\leq s_2< 0$,
$$
\frac{ \int_{\left\lbrace \varphi_a\geq s_1\right\rbrace }
\varphi_a}{\left(1-s_1^2\right)^{\frac{n}{2}} }
\leq
\frac{ \int_{\left\lbrace \varphi_a\geq s_2\right\rbrace }
\varphi_a}{\left(1-s_2^2\right)^{\frac{n}{2}} }.
$$
\end{proof}

\begin{lem}\label{lem  f1f2V} \cite{Nguyen  2023}
 Let $M^n$ be a  closed minimal immersed  submanifold  in  $\mathbb{S}^{N}$.  Suppose 
 $f_1, f_2$ and  $V$ are smooth  functions on $\mathbb{R}$, $ w_{k}={\rm Vol}(\mathbb{S}^{k}) $  and 
 $$
 	\theta_1(t):=\frac{A_1(t)}{B_1(t)},\  \theta_2(t):=\frac{A_2(t)}{B_2(t)},
 $$
 where
 $$
 \left\{\begin{array} { l } 
 	{ A _ { 1 } ( t ) : = \int _ {\left\lbrace \varphi_a\geq t\right\rbrace } f _ { 1 } ( \varphi _ { a } ) } \\
 	{ B _ { 1 } ( t ) : = w _ { n- 1 } \int _ { t } ^ { 1 } f _ { 1 } ( s ) V ^ { \frac { n } { 2 } - 1 } ( s ) d s }
 \end{array}
  \quad
   \left\{\begin{array}{l}
 	A_2(t):=\int_{\left\lbrace \varphi_a\geq t\right\rbrace } f_2\left(\varphi_a\right) \\
 	B_2(t):=w_{n-1} \int_t^1 f_2(s) V^{\frac{n}{2}-1}(s) d s
 \end{array}\right.\right.,
 $$
 then 
 $$
 \frac{d}{d t}\left(B_1\left(\theta_1-\theta_2\right)\right)=f_1 \cdot \frac{d \theta_2}{d t} \cdot\left(\frac{B_2}{f_2}-\frac{B_1}{f_1}\right).
 $$
    \end{lem}
\begin{proof}
By    co-area formula, we have
$$
\frac{d A_1}{d t} \frac{1}{f_1}=\frac{d A_2}{d t} \frac{1}{f_2} \quad\left(=-\int_{\{ \varphi_a=t\} } \frac{1}{\left|\nabla \varphi_a\right|}\right).
$$
By $A_1=\theta_1 B_1$ and $A_2=\theta_2 B_2$, one has
$$
	\frac{d\left(\theta_1 B_1\right)}{d t} \frac{1}{f_1}=\frac{d\left(\theta_2 B_2\right)}{d t} \frac{1}{f_2},
	$$
	and
		$$
	\frac{1}{f_1} \frac{d}{d t}\left(B_1\left(\theta_1-\theta_2\right)\right)+\frac{1}{f_1} \frac{d}{d t}\left(B_1 \theta_2\right)=\frac{1}{f_2}\left(B_2 \frac{d \theta_2}{d t}+\theta_2 \frac{d B_2}{d t}\right) .
		$$
		Hence
				$$
	\frac{1}{f_1} \frac{d}{d t}\left(B_1\left(\theta_1-\theta_2\right)\right)=\frac{d \theta_2}{d t}\left(\frac{B_2}{f_2}-\frac{B_1}{f_1}\right)+\frac{\theta_2}{f_2} \frac{d B_2}{d t}-\frac{\theta_2}{f_1} \frac{d B_1}{d t}.
	$$
Due to $\frac{d B_2}{d t} \cdot \frac{1}{f_2}=\frac{d B_1}{d t} \cdot \frac{1}{f_1}=-w_{n-1} V^{\frac{n}{2}-1}$, one has
$$
\frac{d}{d t}\left(B_1\left(\theta_1-\theta_2\right)\right)=f_1 \cdot \frac{d \theta_2}{d t} \cdot\left(\frac{B_2}{f_2}-\frac{B_1}{f_1}\right).
$$
\end{proof}
\begin{defn}\label{defintion integral Eins manifold}\cite{Ge Li 2020}
 Let $M^n$ $(n\geq3)$ be a compact submanifold in the Euclidean space $\mathbb{R}^{N}$. We call $M^n$ an Integral-Einstein (IE) submanifold if for any unit vector $a\in \mathbb{S}^{N-1}$,
 \begin{equation*}\label{equation integral Einstein manifold}
 \int_{M}\left( {\rm Ric}-\frac{R}{n}\mathbf{g}\right)
 (a^{\rm T},a^{\rm T})=0,
 \end{equation*}
  where ${\rm Ric}$, $R$ are the Ricci, scalar curvature and $\mathbf{g}$ is the induced metric of $M^n$.
 \end{defn}
\begin{thm}\label{thm-IE-hypers-charact}\cite{Ge Li 2020}
 Let $M^n$ be a closed  hypersurface immersed in $\mathbb{S}^{n+1}$. Then \emph{$M^n$} is Integral-Einstein (IE) if and only if for all  $a \in \mathbb{S}^{n+1}$,
 \begin{equation*}\label{equation integral Einstein  hypersurface}
 \int_{M}\left( 1-(n+1)\varphi_a^2-\psi_a^2\right)  =
 \int_{M}\left( \rho-1\right)\left( 1-\varphi_a^2-(n+1)\psi_a^2\right),
 \end{equation*}
 where $\rho-1=\frac{n^2H^2-S}{n(n-1)}$ and $\rho=\frac{R}{n(n-1)}$ is the normalized scalar curvature.
 In particular, we have  the following special cases.
 \begin{itemize}
 \item[(A)] If $M^n$ is minimal, then \emph{$M^n$} is IE if and only if
 $$
  \int_{M}S
 \left(  1-\varphi_a^2-(n+1)\psi_a^2\right) =0, \quad \textit{for all } a \in \mathbb{S}^{n+1}.
 $$
 \item[(B)]
 If $M^n$ is minimal and $S>0$ is constant, then we have
 \begin{equation*}\label{IEintegral-MCSC}
 \int_{M}\left( {\rm Ric}-\frac{R}{n}\mathbf{g}\right)
 (a^{\rm T},a^{\rm T})=S\Big((n+2) \int_{M}{\varphi_a^2}- {\rm Vol}(M^n)\Big).
 \end{equation*}
 In this case, \emph{$M^n$} is IE if and only if any one of the follows holds:
 \begin{itemize}
 \item
$
 \int_{M}\varphi_a^2=\frac{1}{n+2}{\rm Vol }(M^n)$ for all  $a \in \mathbb{S}^{n+1};
$
 \item
 $
 \int_{M}\psi_a^2=\frac{1}{n+2}{\rm Vol }(M^n)$ for all  $ a \in \mathbb{S}^{n+1};
 $
 \item
 $
 \int_{M}\varphi_a^2=\int_{M}\psi_a^2 $ for all  $a \in \mathbb{S}^{n+1};
 $
 \item
 $
 \int_{M}\varphi_a \psi_a f_3=0 $ for all  $ a \in \mathbb{S}^{n+1},
 $
  where $f_3={\rm Tr}(A^3)=3\binom{n}{3}H_3$ and $H_3$ is the third mean curvature.
 \end{itemize}
 \end{itemize}
 \end{thm}
 
 \begin{thm}\cite{Choe and Gulliver 1992}\label{thm Choe and Gulliver 1992 M and partial M}
 Let $M^n$ be a minimal submanifold with boundary $\partial M$ in $\mathbb{S}^{N}$ . Assume that $\partial M$ lies on a geodesic sphere of $\mathbb{S}^{N}$ with radius $r\leq  \frac{\pi}{2}$ and  centered at a point $p\in M^n\subset \mathbb{S}^{N}$.  Then
\begin{equation}
n^n{\rm Vol}(\mathbb{B}^{n}) 
 \left(
\int_{M} \varphi _ { p }\right) ^{n-1}\leq 
{\rm Vol}^n(\partial M),
\end{equation}
where the equality holds if and only if $M^n$ is a totally geodesic ball centered at $p$.
 \end{thm}
 \begin{prop}\label{prop smoothness of nodal set}\cite{Urakawa Hajime Spectral geometry of the Laplacian2017}
 For the Laplacian $\Delta$ of an $n$-dimensional Riemannian manifold $M^n$, 
if there exist $C^{\infty}$ functions $h$ and $f\neq {\rm Const}$ on $M^n$  satisfying that 
 $$
 (\Delta+h)f=0,
 $$
 then, the nodal set of $f$, $f^{-1}(0)$, is an $(n-1)$-dimensional $C^{\infty}$ submanifold of $M^n$ except an $(n-1)$-dimensional measure $0$ closed subset.
 \end{prop}
\section{The volume gap  for  closed minimal  submanifolds}\label{sect3}
Let $f: M^n\looparrowright    \mathbb{S}^{N} \subset \mathbb{R}^{N+1}$ be a closed minimal immersed submanifold  in the unit sphere  $\mathbb{S}^{N}$ and let $\mathbb{B}^{n}$  denote the unit ball  in  $\mathbb{R}^{n}$.
 \begin{defn}\label{defintion xi constant manifold}
We define a function $\xi$ on $\mathbb{S}^{N}$ and a constant $\Xi$ on $M^n$:
 $$
\xi (a)=
\liminf\limits_{t\to {1^-}}
\frac{{\rm Vol}\left\lbrace \varphi_a \geq t\right\rbrace}
{\left(1-t^2\right)^{\frac{n}{2}}{\rm Vol}(\mathbb{B}^{n})}, \ \
\Xi= \sup_{a\in f(M)}\xi (a).
$$
 \end{defn}
By Proposition \ref{prop The monotonicity formula for minimal submanifolds}, it is easy to show that $\xi (a)$ and $\Xi $ are well-defined. The following lemma gives a lower bound estimate for $\xi(a)$ and $\Xi$.

\begin{lem}\label{lemma  gap  results of k for immersion}
 For any point $p\in f(M)$,
 if its preimage set  consists of $m(p)$ distinct points in $M^n$, then  $\xi(p)\geq m(p)$. In particular, $\Xi\geq 2$ if $f$ is not an embedding.
  \end{lem}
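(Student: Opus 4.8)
The strategy is to evaluate $\xi(p)$ directly by taking the unit vector in Definition \ref{defintion xi constant manifold} to be $a=p$ itself (legitimate since $p\in f(M)\subset\mathbb S^N$) and letting $t\uparrow 1$. Using $\langle u,v\rangle=\cos\operatorname{dist}_{\mathbb S^N}(u,v)$ for unit vectors $u,v$, and writing $\rho=\rho(t):=\arccos t$, one has
$$
\{\varphi_p\ge t\}=f^{-1}\!\big(\overline B_\rho(p)\big),\qquad 1-t^2=\sin^2\rho ,
$$
where $\overline B_\rho(p)\subset\mathbb S^N$ is the closed geodesic ball of radius $\rho$ about $p$. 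Thus $t\uparrow1$ corresponds to $\rho\downarrow0$, with $(1-t^2)^{n/2}=\rho^n(1+O(\rho^2))$, and the problem reduces to estimating the volume of the preimage of a shrinking ball about $p$.

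Next I would localise. Since an immersion is a local embedding, $f^{-1}(p)$ is discrete, hence finite; by hypothesis $f^{-1}(p)=\{x_1,\dots,x_m\}$ with $m=m(p)$. Pick pairwise disjoint open sets $U_i\ni x_i$ with $f|_{U_i}$ an embedding. The set $M\setminus\bigcup_iU_i$ is compact and its image avoids $p$, so there is $\delta>0$ with $f\big(M\setminus\bigcup_iU_i\big)\cap\overline B_\delta(p)=\varnothing$. Hence, for all $t$ with $\rho(t)<\delta$,
$$
{\rm Vol}\{\varphi_p\ge t\}=\sum_{i=1}^m{\rm Vol}\big(f(U_i)\cap\overline B_{\rho(t)}(p)\big),
$$
and on $U_i$ the metric induced by $f$ agrees with that of the embedded disk $\Sigma_i:=f(U_i)\subset\mathbb S^N$, so each summand is the honest volume of $\Sigma_i\cap\overline B_{\rho(t)}(p)$.

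The key step — and the only place requiring care — is the local asymptotics
$$
{\rm Vol}\big(\Sigma_i\cap\overline B_\rho(p)\big)={\rm Vol}(\mathbb B^n)\,\rho^n\big(1+O(\rho^2)\big)\qquad(\rho\to0),
$$
i.e.\ the statement that a smooth embedded $n$-submanifold has density $1$ at each of its points, transplanted to $\mathbb S^N$. To prove it I would pass to normal coordinates $\exp_p^{-1}\colon\overline B_\rho(p)\xrightarrow{\ \sim\ }\{v\in T_p\mathbb S^N:|v|\le\rho\}\cong\mathbb R^N$, under which $\Sigma_i$ becomes a smooth submanifold through $0$ tangent there to the $n$-plane $V_i:=df_{x_i}(T_{x_i}M)$; realising it near $0$ as a graph $u\mapsto(u,h_i(u))$ over $V_i$ with $h_i(0)=0$, $Dh_i(0)=0$, one gets $|(u,h_i(u))|^2=|u|^2+O(|u|^4)$, Euclidean area element $1+O(|u|^2)$, and round metric $=$ Euclidean $+\,O(\rho^2)$ on $\overline B_\rho(p)$; integrating over $\{|u|\le\rho(1+O(\rho^2))\}$ yields the expansion. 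The only mildly delicate point is carrying these three error terms along together, which is routine. Combining with the localisation, ${\rm Vol}\{\varphi_p\ge t\}=m\,{\rm Vol}(\mathbb B^n)\rho(t)^n(1+o(1))$, so
$$
\frac{{\rm Vol}\{\varphi_p\ge t\}}{(1-t^2)^{n/2}{\rm Vol}(\mathbb B^n)}\ \xrightarrow[\ t\to1^-\ ]{}\ m ;
$$
thus the $\liminf$ defining $\xi(p)$ is in fact a limit, and $\xi(p)=m(p)\ge m(p)$.

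Finally, if $f$ is not an embedding then, $M$ being compact and $\mathbb S^N$ Hausdorff, $f$ cannot be injective, so some $p\in f(M)$ has $m(p)\ge2$, whence $\Xi=\sup_{a\in f(M)}\xi(a)\ge\xi(p)=m(p)\ge2$. (One could also avoid finiteness of the fibre altogether: applying the localisation above to just two distinct points of $f^{-1}(p)$ already gives ${\rm Vol}\{\varphi_p\ge t\}\ge2\,{\rm Vol}(\mathbb B^n)\rho(t)^n(1+o(1))$, hence $\xi(p)\ge2$.)
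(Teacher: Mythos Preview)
Your proof is correct and follows the same overall architecture as the paper's: localize $\{\varphi_p\ge t\}$ into disjoint neighborhoods of the preimages $q_1,\dots,q_{m(p)}$, then estimate the volume of each piece as $t\to1^-$. The difference lies in the local estimate. The paper observes that intrinsic distance on $M$ dominates extrinsic (spherical) distance, so each $\{\varphi_p\ge t\}\cap U_i$ contains the \emph{intrinsic} geodesic ball $B_r(q_i)\subset M$ with $r=\arccos t$, and then invokes Gray's expansion ${\rm Vol}(B_r(q_i))={\rm Vol}(\mathbb B^n)\,r^n(1+O(r^2))$ to obtain $\xi(p)\ge m(p)$. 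You instead compute ${\rm Vol}\big(\Sigma_i\cap\overline B_\rho(p)\big)$ directly in normal coordinates on $\mathbb S^N$, obtaining a two-sided asymptotic and hence the sharper conclusion that the limit actually exists and equals $m(p)$. The paper's route is shorter (one geometric inequality plus a cited formula); yours is self-contained and gives the exact value of $\xi(p)$ rather than just a lower bound.
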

   \begin{proof}
   Since $\varphi_p(x)=1$ if and only if $f(x)=p$, it follows that
   $$\left\lbrace x\in M^n: \varphi_p(x) =1\right\rbrace=f^{-1}(p) \not=\varnothing.$$ The properties of compactness and  local embedding of $M$ show that    $f^{-1}(p)$ is a finite set, say,
   $f^{-1}(p)=\left\lbrace q_1, q_2,..., q_{m{(p)}}\right\rbrace $ and $q_i\in M$. 
   For each $i$, there is a neighborhood $U_i$ of $q_i$ such that $f$ is a diffeomorphism from  $U_i$ to $f(U_i)$, and by shrinking the $U_i$'s if necessary, we may assume that they are pairwise disjoint. For any sufficiently small $\delta>0$ and for any $1-\delta<t<1$, we have
   $$\left\lbrace  \varphi_p \geq t\right\rbrace\subset \bigsqcup_{i=1}^{m{(p)}}U_i.$$
   Note that the intrinsic distance between two points of the submanifold is always greater  than or equal to the  distance of the ambient manifold. It follows that each $\left\lbrace  \varphi_p \geq t\right\rbrace\cap U_i$ contains a geodesic ball $B_r(q_i)\subset M$ of radius $r$, where $r=\arccos t$ is the radius of the geodesic ball $\{y\in\mathbb{S}^N : \langle y, p\rangle\geq t\}\subset \mathbb{S}^N$. Hence
   $$\bigsqcup_{i=1}^{m{(p)}}B_r(q_i)\subset \left\lbrace  \varphi_p \geq t\right\rbrace.$$
 Recalling the expansion formula of volume  for the geodesic ball $B_r(q_i)$ (cf. \cite{Alfred Gray 1973})
 $$
{\rm Vol} (B_r(q_i))={\rm Vol}(\mathbb{B}^{n})r^n\left[1-\frac{R_M(q_i)}{6(n+2)}r^2+O(r^4) \right],
 $$
where  $R_M$ is the scalar curvature of $M$,
 we have
   $$
     \begin{aligned}
     \xi (p)
     &=
 \liminf\limits_{t\to {1^-}}
   \frac{{\rm Vol}\left\lbrace \varphi_p \geq t\right\rbrace}
   {\left(1-t^2\right)^{\frac{n}{2}}{\rm Vol}(\mathbb{B}^{n})}
\geq
 \liminf\limits_{r\to {0^+}}
      \sum_{i=1}^{m(p)}\frac{
      {\rm Vol}(B_r(q_i))}
      {\sin ^n r{\rm Vol}(\mathbb{B}^{n})}\\
&
=\lim\limits_{r\to {0^+}}
\sum_{i=1}^{m(p)}
      \frac{r^n\left( 1+O(r^2)\right) }
      {\sin ^n r}={m{(p)}}.
     \end{aligned}
   $$
If  $f$ is not an  embedding, we have
$$
\Xi=\sup_{p\in f(M)}
\xi(p)
   \geq\sup_{p\in f(M)}{m{(p)}}\geq2.
$$
  \end{proof}

\begin{lem}\label{Lemma  gap  results of volume sr}
For any fixed unit vector $a \in \mathbb{S}^{N}$, if $M\not\subset \left\lbrace\varphi_a=0\right\rbrace$, then
$$
{\rm Vol}\left\lbrace s\leq\varphi_a \leq r\right\rbrace \geq
n\xi(a){\rm Vol}(\mathbb{B}^{n})
\int_{s}^{r}\left(1-t^2\right)^{\frac{n-2}{2}}dt,
$$
for all $0 \leq s\leq r\leq 1$.
 \end{lem}
 \begin{proof}
 By Proposition \ref{prop The monotonicity formula for minimal submanifolds},
 for $0\leq s_1\leq s_2< 1$, we have
 $$
 \frac{ \int_{\left\lbrace \varphi_a\geq s_1\right\rbrace }
 \varphi_a}{\left(1-s_1^2\right)^{\frac{n}{2}} }
 \geq
 \frac{ \int_{\left\lbrace \varphi_a\geq s_2\right\rbrace }
 \varphi_a}{\left(1-s_2^2\right)^{\frac{n}{2}} }.
 $$
 Let $s_1=t\geq0$ and $s_2\to 1^-$,
then we have
 \begin{equation}\label{eq-t0}
    \begin{aligned}
  \int_{\left\lbrace \varphi_a\geq t\right\rbrace}\varphi_a
  &\geq
 \left(1-t^2\right)^{\frac{n}{2}}
  \liminf\limits_{s_2\to {1^-}}
   \frac{ \int_{\left\lbrace \varphi_a\geq s_2\right\rbrace }
   \varphi_a}
   {\left(1-s_2^2\right)^{\frac{n}{2}} }
   \geq
 \left(1-t^2\right)^{\frac{n}{2}}
  \liminf\limits_{s_2\to {1^-}}
   \frac{ \int_{\left\lbrace \varphi_a\geq s_2\right\rbrace }
s_2}
   {\left(1-s_2^2\right)^{\frac{n}{2}} } \\
  &=
 \left(1-t^2\right)^{\frac{n}{2}}
  \liminf\limits_{s_2\to {1^-}}
  \frac{{\rm Vol}\left\lbrace \varphi_a \geq s_2\right\rbrace}
  {\left(1-s_2^2\right)^{\frac{n}{2}}}
      \liminf\limits_{s_2\to {1^-}}s_2\\
    &=\xi(a){\rm Vol}(\mathbb{B}^{n})
\left(1-t^2\right)^{\frac{n}{2}}.
  \end{aligned}
  \end{equation}
  Thus,  similar to (\ref{equation 1 leqr to geqr varphia}), we derive
  \begin{equation*}\label{equation 2 leqr to geqr varphia}
  \begin{aligned}
  \int_{\left\lbrace s\leq\varphi_a \leq r\right\rbrace }
1
  &=\int_s^r dt
  \int_{\left\lbrace \varphi_a=t\right\rbrace }
  \frac{1}{|a^{\rm T}|}
  \geq
  \int_s^r dt
  \int_{\left\lbrace \varphi_a=t\right\rbrace }
  \frac{1}{\sqrt{1-\varphi_a^2}}\\
  &=
  \int_s^r dt
  \int_{\left\lbrace \varphi_a=t\right\rbrace }
  \frac{1}{\sqrt{1-t^2}}
  \geq
  \int_s^r dt
  \int_{\left\lbrace \varphi_a\geq t\right\rbrace }
  \frac{n}{{1-t^2}}
  \varphi_a\\
&\geq
n\xi(a){\rm Vol}(\mathbb{B}^{n})
\int_{s}^{r}\left(1-t^2\right)^{\frac{n-2}{2}} dt.
  \end{aligned}
  \end{equation*}
 \end{proof}


 The following inequality shown in (\ref{eq-t0}) will be used later.

\begin{lem}\label{Lemma  gap  results of varphi a volume sr}
For any fixed unit vector $a \in \mathbb{S}^{N}$, if $M\not\subset \left\lbrace\varphi_a=0\right\rbrace$, then
$$
   \int_{\left\lbrace \varphi_a\geq t\right\rbrace}\varphi_a
   \geq \xi(a){\rm Vol}(\mathbb{B}^{n})
    \left(1-t^2\right)^{\frac{n}{2}},
$$
for all $0 \leq t \leq 1$.
 \end{lem}

\begin{thm}\label{thm  f_1(s)=1+s^2, f_2(s)=s, V(s)=1-s^2}
  Let $M^n$ be a  closed minimal immersed  submanifold  in  $\mathbb{S}^{N}$. If $M\not\subset \left\lbrace\varphi_a=0\right\rbrace$, then
 $$
 	\begin{aligned}
 \int_{\{ \varphi_a \geq 0\}}(1+\varphi_{ a }^2)&\geq  
    \frac{(n+2){\rm Vol}(\mathbb{S}^n)}{2(n+1){\rm Vol}(\mathbb{B}^n)} \int_{\{ \varphi_a\geq 0\}}\varphi_a,
 \end{aligned}
 $$
   where the equality  holds if and only if $M^n$ is totally geodesic.
 \end{thm}

 \begin{proof}
 Let $f_1(s)=1+s^2, f_2(s)=s, V(s)=1-s^2$ in Lemma \ref{lem  f1f2V}, we have
 $$
 \left\{\begin{array} { l } 
 	{ \theta _ { 1 } = \frac { A _ { 1 } } { B _ { 1 } } }\\
 	{ A _ { 1 } = \int _ { \{ \varphi _ { a }\geq t \} } \left( 1+\varphi_{ a }^2\right)  } \\
 	{ B _ { 1 } = w _ { n - 1 } \int _ { t } ^ { 1 }(1+s^2) ( 1 - s ^ { 2 } ) ^ { \frac { n } { 2 } - 1 } d s } 
 \end{array} \quad \left\{\begin{array}{l}
 	\theta_2=\frac{A_2}{B_2}\\
 	A_2=\int_{\{\varphi_a\geq t\} } \varphi_a \\
 	B_2=w_{n-1} \int_t^1 s\left(1-s^2\right)^{\frac{n}{2}-1} d s
 \end{array}\right.\right..
 $$
 (i) By Proposition \ref{prop The monotonicity formula for minimal submanifolds}, we have
 $$
 \frac{d \theta_2}{d t}=\frac{d}{d t}\left(\frac{A_2}{B_2}\right)=\frac{d}{d t}\left(\frac{\int_{\{\varphi_a\geq t\} } \varphi_a}{\frac{\omega_{n-1}}{n}\left(1-t^2\right)^{\frac{n}{2}}}\right)\leq 0 \quad (0<t<1).
 $$
 (ii) For $0<t<1$, one has
\begin{eqnarray} 
&~&  f_1\left(\frac{B_2}{f_2}-\frac{B_1}{f_1}\right)
\nonumber \\
& =&\frac{f_1}{f_2} B_2-B_1=\frac{1+t^2}{t} B_2-B_1 \nonumber\\
& =&\frac{\omega_{n-1}(1+t^2)}{t} \int_t^1 s\left(1-s^2\right)^{\frac{n}{2}-1} d s-w_{n-1} \int_t^1(1+s^2)\left(1-s^2\right)^{\frac{n}{2}-1} d s \nonumber\\
& =&w_{n-1} \int_t^1 \frac{\left( s-t\right) \left( 1-ts\right) }{t}\left(1-s^2\right)^{\frac{n}{2}-1} d s \nonumber\\
&>& 0.\nonumber
\end{eqnarray}
 Due to (i) and (ii), we have
 $$
 \frac{d}{d t}\left(B_1\left(\theta_1-\theta_2\right)\right)=f_1 \frac{d \theta_2}{d t} \cdot\left(\frac{B_2}{f_2}-\frac{B_1}{f_1}\right) \leq 0
 \quad (0<t<1). 
 $$
 Hence, 
 $B_1\left(\theta_1-\theta_2\right)$ is monotone decreasing for $0<t<1$  and
 $$
 	\begin{aligned}
 \left.B_1\left(\theta_1-\theta_2\right)\right|_{t=0}
 & \geq
 	\lim _{t \rightarrow 1^{-}} B_1\left(\theta_1-\theta_2\right)
 	=\lim _{t \rightarrow 1^{-}}\left(A_1-\frac{B_1}{B_2} A_2\right) \\
 	&=\lim _{t \rightarrow 1^{-}}\left(\int_{\{ \varphi_a \geq t\}} \left( 1+\varphi_{ a }^2\right) -\frac{\int_t^1(1+s^2)\left(1-s^2\right)^{\frac{n}{2}-1} d s}{\int_t^1 s\left(1-s^2\right)^{\frac{n}{2}-1} d s} \int_{\{ \varphi_a\geq t\}} \varphi_a\right) \\
 	&=0.
 \end{aligned}
 $$
 Thus
 $$
 \left.B_1\left(\theta_1-\theta_2\right)\right|_{t=0}=\int_{\{ \varphi_a \geq 0\}} 
 \left( 1+\varphi_{ a }^2\right)
 -\frac{\int_0^1(1+s^2)\left(1-s^2\right)^{\frac{n}{2}-1} d s}{\int_0^1 s\left(1-s^2\right)^{\frac{n}{2}-1} d s} \int_{\{ \varphi_a\geq 0\}} \varphi_a\geq0,
 $$
 and
 $$
 	\begin{aligned}
 \int_{\{ \varphi_a \geq 0\}}(1+\varphi_{ a }^2)&\geq \frac{\int_0^1(1+s^2)\left(1-s^2\right)^{\frac{n}{2}-1} d s}{\int_0^1 s\left(1-s^2\right)^{\frac{n}{2}-1} d s} \int_{\{ \varphi_a\geq 0\}} \varphi_a
 =   \frac{(n+2){\rm Vol}(\mathbb{S}^n)}{2(n+1){\rm Vol}(\mathbb{B}^n)} \int_{\{ \varphi_a\geq 0\}}\varphi_a.
 \end{aligned}
 $$
If the equality  above  holds, then   $B_1\left(\theta_1-\theta_2\right)$ is constant by its monotonicity. Hence
$$ \frac{\int_{\{\varphi_a\geq t\} } \varphi_a}{ {\rm Vol}(\mathbb{B}^n)\left(1-t^2\right)^{\frac{n}{2}}}\equiv \xi(a) \quad (0<t<1)$$ 
 by (i) and (ii). 
 If $\xi(a) =0$, then 
 $$
  	\begin{aligned}
  0<{\rm Vol}\{ \varphi_a \geq 0\}
  \leq
  \int_{\{ \varphi_a \geq 0\}}(1+\varphi_{ a }^2)
  =   \frac{(n+2){\rm Vol}(\mathbb{S}^n)}{2(n+1){\rm Vol}(\mathbb{B}^n)} \int_{\{ \varphi_a\geq 0\}}\varphi_a=0.
  \end{aligned}
  $$
  Therefore $\xi(a) >0$ and $a\in f(M)$.
 Due to the proof of  Proposition \ref{prop The monotonicity formula for minimal submanifolds}, one has
\begin{equation}\label{equation a2 psi2 vaephi2=} 
 |a^{\rm T}|^2+\varphi_a^2= 1 
\end{equation} 
on ${\{ \varphi_a \geq 0\}}$. 
Thus for all $0<t<1$,  
$ {\left\lbrace \varphi_a\geq  t\right\rbrace }$
 is a free boundary minimal submanifold of  $\mathbb{S}^N_t$, where
 $\mathbb{S}^N_t:={\left \lbrace x\in \mathbb{S}^N: \varphi_a(x)\geq t\right\rbrace }
$, i.e.,  $ {\left\lbrace \varphi_a= t\right\rbrace }$ intersects $\partial \mathbb{S}^N_t$ orthogonally. Hence,  $ {\left\lbrace \varphi_a\geq  0\right\rbrace }$ is a star-shaped minimal cone 
at the center $a$. 
For any   sufficiently small $\delta>0$ and  for any  $1-\delta<r<1$, we  have  $m=m(a)$ (multiplicity at $a\in f(M)$) disjoint connected regions $M_i\subset M$ such that
   $$\left\lbrace  \varphi_a \geq r\right\rbrace
   =
    \bigsqcup_{i=1}^{m}M_i.$$
Note that $M_i$ is a geodesic ball of $M^n$ with radius $s=\arccos r$ centered at $a\in f(M)$, since  $ {\left\lbrace \varphi_a\geq  0\right\rbrace }$ is a star-shaped minimal cone. 
    Similar to the proof of Lemma \ref{lemma  gap  results of k for immersion}, we have \begin{equation}\label{equation xi=ma} 
     \begin{aligned}
     \xi (a)
     &=
 \liminf\limits_{r\to {1^-}}
   \frac{{\rm Vol}\left\lbrace \varphi_a \geq r\right\rbrace}
   {\left(1-r^2\right)^{\frac{n}{2}}{\rm Vol}(\mathbb{B}^{n})}
=
 \liminf\limits_{s\to {0^+}}
      \sum_{i=1}^{m}\frac{
      {\rm Vol}(M_i)}
      {\sin ^n s{\rm Vol}(\mathbb{B}^{n})}\\
&
=\lim\limits_{s\to {0^+}}
\sum_{i=1}^{m}
      \frac{s^n\left( 1+O(s^2)\right) }
      {\sin ^n s}={m{}}.
     \end{aligned}
\end{equation}
Notice that (\ref{equation xi=ma}) implies that the density of $ {\left\lbrace \varphi_a\geq  0\right\rbrace }$ at $a\in f(M)$ is $\xi(a)=m$. 
By (\ref{equation a2 psi2 vaephi2=}), (\ref{equation xi=ma}), Lemma \ref{lemma  gap  results of k for immersion} and a similar  proof of Proposition \ref{prop The monotonicity formula for minimal submanifolds}, for all $1\leq i \leq m$, we have
$$
\int_{M_i}
\varphi_a=
\int_{\partial M_i}
\frac{|a^{\rm T}|}{n}=
\int_{\partial M_i}
\frac{\sqrt{1-\varphi_a^2}}{n}=
\frac{\sqrt{1-r^2}}{n}{\rm Vol}{(\partial M_i)}$$
and
$$ \frac{\int_{M_i } \varphi_a}{{\rm Vol}(\mathbb{B}^n)\left(1-r^2\right)^{\frac{n}{2}}}\equiv  1.$$
Hence
\begin{equation*}
n^n{\rm Vol}(\mathbb{B}^{n}) 
 \left(
\int_{M_i} \varphi _ { p }\right) ^{n-1}=
{\rm Vol}^n(\partial M_i)
\end{equation*}
for all  $1\leq i \leq m$. By Theorem \ref{thm Choe and Gulliver 1992 M and partial M}, one has
$\left\lbrace  \varphi_a \geq r\right\rbrace
   =
    \bigsqcup_{i=1}^{m}M_i$  is totally geodesic in $\mathbb{S}^N$. 
      As $M^n$ is minimal, $\Delta \varphi_b=-n\varphi_b$ for all $b\in \mathbb{S}^N$ by Proposition \ref{prop funda}. 
       Since  the dimension of the first eigenfunctions $\left\lbrace \varphi_b\right\rbrace $  for  an $n$-dimensional  totally geodesic submanifold in $\mathbb{S}^N$ is $n+1$ and $n<N$, once $M^n$ is not totally geodesic globally (though the $m$ pieces $M_i$ are), we can  find a non-zero eigenfunction $\varphi_b$  for some $b\in \mathbb{S}^N$ and $M_i$ such that $b\bot M_i$, i.e.,
    $$
         \varphi_b=
          0 \quad \text{on}\ 
    M_i.
$$
This makes a contradiction with Proposition \ref{prop smoothness of nodal set}, since 
 the nodal set of $\varphi_b$ (i.e., $\varphi_b^{-1}(0)$) includes a subset $M_i$ of dimension $n$. Hence, $M^n$ is totally geodesic in $\mathbb{S}^N$ if the equality of the theorem holds.
 \end{proof}

 \begin{cor}\label{thm  f_1(s)=1, f_2(s)=s, V(s)=1-s^2}
  Let $M^n$ be a  closed minimal immersed  submanifold  in  $\mathbb{S}^{N}$. If $M\not\subset \left\lbrace\varphi_a=0\right\rbrace$, then
  $$
  \int_{\{ \varphi_a \geq 0\}}1 \geq   \frac{{\rm Vol}(\mathbb{S}^n)}{2{\rm Vol}(\mathbb{B}^n)}\int_{\{ \varphi_a\geq 0\}}\varphi_a, \quad  {\rm Vol}(M^n) \geq \frac{{\rm Vol}(\mathbb{S}^n)}{2{\rm Vol}(\mathbb{B}^n)}\int_{M}|\varphi_a|,
  $$
    where each equality  holds if and only if $M^n$ is totally geodesic.
  \end{cor}
  \begin{proof}
   By Proposition \ref{prop funda}, $$\nabla \varphi_a=a^{\rm T},\ \ \Delta \varphi_a=-n\varphi_a.$$
   Due to  the divergence theorem and
   $
   |a^{\rm T}|^2+\varphi_a^2\leq 1,
  $
  we have
   \begin{equation}\label{equation volM geq n+1varphi}
   \int_{\{ \varphi_a \geq 0\}}1 \geq (n+1) \int_{\{ \varphi_a \geq 0\}} \varphi_{a}^2, \quad {\rm Vol}(M^n) \geq (n+1) \int_{M} \varphi_{a}^2,
   \end{equation}
   and  the required inequalities follow from  Theorem \ref{thm  f_1(s)=1+s^2, f_2(s)=s, V(s)=1-s^2}.
   Since each equality of  (\ref{equation volM geq n+1varphi})  holds if and only if $ |a^{\rm T}|^2+\varphi_a^2=1$ by (\ref{equation a2 psi2 vaephi2}), 
     which characterizes that $M^n$ is totally geodesic by a similar proof of Theorem \ref{thm  f_1(s)=1+s^2, f_2(s)=s, V(s)=1-s^2}.
  \end{proof}

 \begin{proof}[\textbf{Proof of Theorem \ref{theorem  gap  results of volume in the m preimage varphi p2}}]
 Without loss of generality, suppose
 $M\not\subset \left\lbrace\varphi_a=0\right\rbrace$ for any $a\in \mathbb{S}^N$.
    By Lemma \ref{Lemma  gap  results of varphi a volume sr},
    we have
 $$
 \int_{\left\lbrace \varphi_a\geq 0\right\rbrace}\varphi_a \geq \xi(a){\rm Vol}(\mathbb{B}^{n})\geq 
    m(a){\rm Vol}(\mathbb{B}^{n}).
    $$
   By Proposition \ref{prop funda}, for any $a\in {\mathbb{S}^{N}}$, $\int_{M}\varphi_{a}=0,$ thus
   \begin{equation}\label{equation varphi s geq ma Bn}
   \int_{{\left\lbrace \varphi_a\geq 0\right\rbrace  }}\varphi_{a}=\int_{\left\lbrace \varphi_a
   \leq 0\right\rbrace  }
   -\varphi_a\geq 
      m(a){\rm Vol}(\mathbb{B}^{n}).
   \end{equation}
  If there exists a point $p\in f(M)$ such that its preimage set  consists of $m$ distinct points in $M^n$, then
 combining (\ref{equation volM geq n+1varphi}),  (\ref{equation varphi s geq ma Bn}) and Theorem \ref{thm  f_1(s)=1+s^2, f_2(s)=s, V(s)=1-s^2}, we obtain
 $$
   {\rm Vol}(M^n) 
     \geq  
  \frac{n+1}{n+2}
 \int_{M}\left( 1+\varphi_{p}^2\right) 
 \geq 
 m{\rm Vol}(\mathbb{S}^{n}),$$
where each equality holds if and only if $M^n$ is totally geodesic.
  \end{proof}
 \begin{proof}[\textbf{Proof of Corollary {\ref{theorem  gap  results of volume nonembedded hypersurface}}}] 
By Proposition \ref{prop funda} and    $
   |a^{\rm T}|^2+\varphi_a^2+\psi_{a}^2= 1
  $, we have
  $$
    (n+1)
      \int_{M} \varphi_{a}^2+\int_{M} \psi_{a}^2
      =      {\rm Vol}(M^n) 
  $$
  for all $a \in \mathbb{S}^{n+1}$. 
  This completes the proof
 by Theorem \ref{theorem  gap  results of volume in the m preimage varphi p2}.
   \end{proof}
   \begin{proof}[\textbf{Proof of Corollary {\ref{theorem  gap  results of volume nonembedded IE n dim to n+1 sphere}}}]
   By  Proposition \ref{prop funda}, Proposition \ref{prop funda psi} and Theorem \ref{thm-IE-hypers-charact}, we have
\begin{equation*}
S_{\max}\int_M \psi_a^2\geq \int_M S\psi_a^2=\int_M |Aa^{\rm T}|^2 
=\int_{M}\frac{S}{n}|a^{\rm T}|^2.
\end{equation*}
Due to
$
\int_{M }
{n}\varphi_a^2=
\int_{M}{|a^{\rm T}|^2}$ and $ \varphi_a^2+ |a^{\rm T}|^2+\psi_{a}^2=1
$,
 one has
 \begin{equation*}
 S_{\max}\int_M \psi_a^2\geq 
\frac{{S_{\min}} }{n+1} \int_{M}\left( |a^{\rm T}|^2+{\varphi_{a}}^2\right) 
=
\frac{{S_{\min}} }{n+1} \int_{M}\left( 1-{\psi_{a}}^2\right) .
 \end{equation*}
 Hence
 \begin{equation*}
\int_M \psi_a^2\geq \frac{S_{\min}}{ (n+1)S_{\max}+ S_{\min}}{{\rm Vol}(M^n)}.
 \end{equation*}
 This completes the proof by Corollary \ref{theorem  gap  results of volume nonembedded hypersurface}.
  \end{proof}

\section{The volume gap  for  closed minimal  hypersurfaces}\label{sect4}
In this section, we give
 volume gaps for both immersed and embedded closed minimal  hypersurfaces  in $\mathbb{S}^{n+1}$ under some conditions.  Firstly, we prove  Theorem \ref{theorem  gap  results of volume for hypersurface  constant S f3} and we need the following lemmas.
\begin{lem}\label{lemma GENERALIZED HADAMARD Thm  for immersion}\cite{Frankel 1966}
 Let $N^{n+1}$  be a complete, connected manifold with positive Ricci curvature. Let $V^{n}$ and $W^{n}$ be immersed minimal hypersurfaces of $N^{n+1}$, each immersed as a closed subset, and let $V^{n}$ be compact. Then $V^{n}$ and $W^{n}$ must intersect.
  \end{lem}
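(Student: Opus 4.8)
The plan is to adapt Frankel's synchronous-variation argument \cite{Frankel 1966}, exploiting the minimality of $V^n$ and $W^n$ together with a full parallel orthonormal frame to kill the boundary terms. Suppose, for contradiction, that the images of $V^n$ and $W^n$ in $N^{n+1}$ are disjoint. Since $\iota_V(V^n)$ is compact, $\iota_W(W^n)$ is closed, and $N^{n+1}$ is complete, Hopf--Rinow guarantees that $\ell:={\rm dist}(V,W)>0$ is attained by a unit-speed minimizing geodesic $\gamma:[0,\ell]\to N$ with $\gamma(0)=p$ on $V$ and $\gamma(\ell)=q$ on $W$. By the first variation of arc length, $\gamma$ must meet $V$ orthogonally at $p$ and $W$ orthogonally at $q$; as $V$ and $W$ are hypersurfaces, this means $T_pV^{\perp}=\mathbb{R}\,\gamma'(0)$ and $T_qW^{\perp}=\mathbb{R}\,\gamma'(\ell)$.

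Next I would transport a frame along $\gamma$. Fix an orthonormal basis $e_1,\dots,e_n$ of $T_pV$ and let $E_i$ be the parallel field along $\gamma$ with $E_i(0)=e_i$. Since $\gamma'$ is parallel and parallel transport is an isometry, $\langle E_i,\gamma'\rangle\equiv 0$, so $E_1,\dots,E_n,\gamma'$ is an orthonormal frame along $\gamma$; moreover $\gamma'(\ell)$ is the parallel transport of $\gamma'(0)$, hence $E_i(\ell)\in\gamma'(\ell)^{\perp}=T_qW$, that is, $E_1(\ell),\dots,E_n(\ell)$ is an orthonormal basis of $T_qW$. For each $i$, by the standard interpolation construction I would then build a variation $f_i(s,t)$ of $\gamma$ whose endpoint curves $s\mapsto f_i(s,0)$ and $s\mapsto f_i(s,\ell)$ lie on $V$ and $W$ with initial velocities $e_i$ and $E_i(\ell)$, and with variation field $\partial_s f_i|_{s=0}=E_i$; this is possible precisely because $E_i(0)\in T_pV$ and $E_i(\ell)\in T_qW$ (near the relevant preimages of $p$ and $q$ the immersions are embeddings, so the construction is local).

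I would then feed each $f_i$ into the second-variation formula for arc length; write $L_i(s):=L(f_i(s,\cdot))$. Because $\gamma$ is a geodesic and $E_i$ is orthogonal to $\gamma'$ at the endpoints, $L_i'(0)=0$; because $\gamma$ realizes the least length among curves from $V$ to $W$, $L_i''(0)\ge 0$ for every $i$. Since $E_i$ is parallel along $\gamma$, the second-variation formula reduces to
\begin{equation*}
L_i''(0)=-\int_0^\ell\langle R(E_i,\gamma')\gamma',E_i\rangle\,\mathrm{d}t+\big\langle \nabla_s\partial_s f_i(0,\ell),\gamma'(\ell)\big\rangle-\big\langle \nabla_s\partial_s f_i(0,0),\gamma'(0)\big\rangle,
\end{equation*}
where $\nabla_s\partial_s f_i(0,0)$ and $\nabla_s\partial_s f_i(0,\ell)$ are the covariant accelerations in $N$ of the two endpoint curves, whose components along $\gamma'(0)$ and $\gamma'(\ell)$ equal $\langle \mathrm{II}_V(e_i,e_i),\gamma'(0)\rangle$ and $\langle \mathrm{II}_W(E_i(\ell),E_i(\ell)),\gamma'(\ell)\rangle$ respectively. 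Now sum over $i=1,\dots,n$: the frame relation gives $\sum_i\langle R(E_i,\gamma')\gamma',E_i\rangle=\mathrm{Ric}(\gamma',\gamma')$, while minimality of $V$ and $W$ gives $\sum_i\mathrm{II}_V(e_i,e_i)=0$ and $\sum_i\mathrm{II}_W(E_i(\ell),E_i(\ell))=0$, so every boundary term cancels and
\begin{equation*}
0\le\sum_{i=1}^n L_i''(0)=-\int_0^\ell\mathrm{Ric}\big(\gamma'(t),\gamma'(t)\big)\,\mathrm{d}t<0,
\end{equation*}
the last strict inequality because $N^{n+1}$ has positive Ricci curvature and $\ell>0$. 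This contradiction forces the images of $V^n$ and $W^n$ to intersect.

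I expect the main point requiring care to be the second-variation bookkeeping: getting the two boundary terms with their correct signs and identifying them with the normal components of the second fundamental forms of $V$ and $W$, so that minimality makes the summed contributions vanish. Building the variations $f_i$ honestly when $V$ and $W$ are merely immersed (rather than embedded) is routine but should still be spelled out; the remaining ingredients are just Hopf--Rinow and the standard first- and second-variation formulas.
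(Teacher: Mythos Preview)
Your proof is correct and is precisely Frankel's original second-variation argument. Note, however, that the paper does not supply its own proof of this lemma: it is simply quoted from \cite{Frankel 1966} and used as a black box, so there is nothing to compare your approach against beyond the original source, which you have faithfully reproduced.
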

\begin{lem}\label{lemma square of varphi}
Let $f: M^n\rightarrow \mathbb{S}^{n+1}$ be a closed minimal  immersed  hypersurface. There exists a unit vector $a \in f(M)$ such that
 \begin{equation}\label{equation square of varphia more that volSn}
\int_M\varphi_a^2
\geq
\frac{1}{n+1}{\rm Vol}(\mathbb{S}^{n}).
\end{equation}
 In particular,  if $M^n$ is invariant under the antipodal map, then $(\ref{equation square of varphia more that volSn})$ holds for any unit vector $a \in f(M)$.
 \end{lem}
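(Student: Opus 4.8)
The plan is to estimate the two half-integrals $\int_{\{\varphi_a\ge 0\}}\varphi_a^2$ and $\int_{\{\varphi_a\le 0\}}\varphi_a^2$ separately by means of the monotonicity formula, and then to use Frankel's intersection lemma (Lemma \ref{lemma GENERALIZED HADAMARD Thm  for immersion}) to exhibit a single unit vector $a\in f(M)$ whose antipode $-a$ also lies in $f(M)$, so that both halves are simultaneously under control.

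First I would prove the one-sided estimate that, for any $a\in\mathbb{S}^{n+1}$ with $M\not\subset\{\varphi_a=0\}$,
$$\int_{\{\varphi_a\ge 0\}}\varphi_a^2\ \ge\ \frac{\xi(a)}{2(n+1)}\,{\rm Vol}(\mathbb{S}^n).$$
Since $0\le\varphi_a\le 1$ on $\{\varphi_a\ge 0\}$, Fubini's theorem gives $\int_{\{\varphi_a\ge 0\}}\varphi_a^2=\int_0^1\big(\int_{\{\varphi_a\ge t\}}\varphi_a\big)\,\mathrm{d}t$, and then the bound $\int_{\{\varphi_a\ge t\}}\varphi_a\ge\xi(a){\rm Vol}(\mathbb{B}^n)(1-t^2)^{n/2}$ of \eqref{eq-t0} (valid for $t\ge 0$) yields $\int_{\{\varphi_a\ge 0\}}\varphi_a^2\ge\xi(a){\rm Vol}(\mathbb{B}^n)\int_0^1(1-t^2)^{n/2}\mathrm{d}t$. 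The last integral is elementary: slicing $\mathbb{B}^{n+1}$ by the hyperplanes $\{x_{n+1}=t\}$ gives $\int_0^1(1-t^2)^{n/2}\mathrm{d}t={\rm Vol}(\mathbb{B}^{n+1})/(2\,{\rm Vol}(\mathbb{B}^n))$, which together with ${\rm Vol}(\mathbb{S}^n)=(n+1){\rm Vol}(\mathbb{B}^{n+1})$ proves the claim.

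Next I would choose the vector $a$. The antipodal map $\iota$ of $\mathbb{S}^{n+1}$ is an isometry, so $\iota\circ f\colon M^n\to\mathbb{S}^{n+1}$ is again a closed minimal immersed hypersurface, with image the compact set $-f(M)$. Applying Lemma \ref{lemma GENERALIZED HADAMARD Thm  for immersion} to the two immersed minimal hypersurfaces $f(M)$ and $-f(M)$ of $\mathbb{S}^{n+1}$ --- the former being compact, both immersed as closed subsets, and ${\rm Ric}_{\mathbb{S}^{n+1}}>0$ --- shows that they must intersect; hence there is $a\in f(M)$ with $-a\in f(M)$. For such $a$ one has $M\not\subset\{\varphi_a=0\}=\{\varphi_{-a}=0\}$ (otherwise $\langle a,a\rangle=0$), and Lemma \ref{lemma  gap  results of k for immersion} gives $\xi(a)\ge m(a)\ge 1$ and $\xi(-a)\ge m(-a)\ge 1$. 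Applying the one-sided estimate above to both $a$ and $-a$, and using $\varphi_{-a}=-\varphi_a$ (so that $\{\varphi_a\le 0\}=\{\varphi_{-a}\ge 0\}$),
$$\int_M\varphi_a^2=\int_{\{\varphi_a\ge 0\}}\varphi_a^2+\int_{\{\varphi_{-a}\ge 0\}}\varphi_{-a}^2\ \ge\ \frac{\xi(a)+\xi(-a)}{2(n+1)}\,{\rm Vol}(\mathbb{S}^n)\ \ge\ \frac{1}{n+1}\,{\rm Vol}(\mathbb{S}^n),$$
which is the asserted inequality. If $M^n$ is invariant under the antipodal map then $-f(M)=f(M)$, so every $a\in f(M)$ automatically satisfies $-a\in f(M)$ and the same computation applies to all such $a$, giving the second assertion.

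I expect the crux to be the selection of $a$: absent antipodal symmetry, obtaining a single $a\in f(M)$ that governs both half-integrals is precisely the statement that $f(M)$ meets its own antipodal image, and the only substantial input is Frankel's lemma. The one-sided estimate and the final combination are routine once the monotonicity formula of Sections 2--3 is in hand. (One should note in passing that the more naive route via the identity $(n+1)\int_M\varphi_a^2={\rm Vol}(M)-\int_M\langle a,\nu\rangle^2$ is circular here, since it would require the volume lower bound one is trying to prove.)
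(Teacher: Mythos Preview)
Your proof is correct and follows essentially the same strategy as the paper: use Frankel's lemma to find $a\in f(M)$ with $-a\in f(M)$, then bound $\int_M\varphi_a^2$ via the monotonicity formula (through \eqref{eq-t0}) on each half $\{\varphi_a\ge0\}$ and $\{\varphi_a\le0\}$. The only difference is cosmetic: the paper rewrites $\int_M\varphi_a^2$ with the Co-Area formula and then invokes \eqref{equation Stokess formula to varphia} and monotonicity (arriving at $\int_0^1 t^2(1-t^2)^{(n-2)/2}\,dt$), whereas your layer-cake identity $\int_{\{\varphi_a\ge0\}}\varphi_a^2=\int_0^1\!\int_{\{\varphi_a\ge t\}}\varphi_a\,dt$ plugs \eqref{eq-t0} in directly (arriving at $\int_0^1(1-t^2)^{n/2}\,dt$); the two integrals of course give the same value, and your route is marginally cleaner.
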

 \begin{proof}
 By Lemma \ref{lemma GENERALIZED HADAMARD Thm  for immersion}  (or Hsiang \cite{Hsiang 1967}), there exists a unit vector $a \in f(M)$ such that $-a\in f(M)$, since otherwise, we can find  two disjoint minimal hypersurfaces by antipodal map.  By the   co-area formula, (\ref{equation Stokess formula to varphia}), Proposition \ref{prop The monotonicity formula for minimal submanifolds} and Lemma \ref{lemma  gap  results of k for immersion}, we have
 \begin{equation}\label{equation square of varphia}
   \begin{aligned}
   \int_{M}
\varphi_a^2
   &=\int_0^1 dt
   \int_{\left\lbrace |\varphi_a|=t\right\rbrace }
   \frac{\varphi_a^2}{|a^{\rm T}|}
   \geq
   \int_0^1 dt
   \int_{\left\lbrace |\varphi_a|=t\right\rbrace }
   \frac{\varphi_a^2}{\sqrt{1-\varphi_a^2}}\\
   &=
   \int_0^1 dt
   \int_{\left\lbrace |\varphi_a|=t\right\rbrace }
   \frac{t^2}{\sqrt{1-t^2}}
   \geq
   \int_0^1 dt
   \int_{\left\lbrace |\varphi_a| \geq t\right\rbrace }
   \frac{nt^2}{{1-t^2}}
   |\varphi_a|\\
   &\geq
n\lim\limits_{u\to {1^-}}
     \frac{{\rm Vol}\left\lbrace |\varphi_a| \geq u\right\rbrace}
     {\left(1-u^2\right)^{\frac{n}{2}}}
   \int_{0}^{1}t^2\left(1-t^2\right)^{\frac{n-2}{2}}dt\\
    &=
        \left(\xi(a)+\xi(-a) \right) n{\rm Vol}(\mathbb{B}^{n})\int_{0}^{1}t^2\left(1-t^2\right)^{\frac{n-2}{2}}dt\\
   &\geq
     2n{\rm Vol}(\mathbb{B}^{n})\int_{0}^{1}t^2\left(1-t^2\right)^{\frac{n-2}{2}}dt\\
 &=
 \frac{1}{n+1}{\rm Vol}(\mathbb{S}^{n}).
   \end{aligned}
   \end{equation}
 \end{proof}

 \begin{thm}\label{theorem  gap  results of volume for hypersurface  constant S f3}
  Let $M^n$ be a non-totally-geodesic closed minimal immersed  hypersurface  in  $\mathbb{S}^{n+1}$ with constant scalar curvature. If the third mean curvature is constant (or $M^n$ is Integral-Einstein, see Definition  \ref{defintion integral Eins manifold}), then
  $$
  {\rm Vol}(M^n)
 \geq
 \frac{n+2}{n+1}
  {\rm Vol}(\mathbb{S}^{n}).$$
   \end{thm}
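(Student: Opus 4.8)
The plan is to feed the monotonicity formula (Proposition~\ref{prop The monotonicity formula for minimal submanifolds}) the rigidity supplied by the curvature hypotheses. Write $M^{n}\looparrowright\mathbb{S}^{n+1}\subset\mathbb{R}^{n+2}$, with position field $u$, unit normal $\nu$, shape operator $A$ and $S:=|A|^{2}$. Since a minimal hypersurface has scalar curvature $n(n-1)-S$, the hypothesis says that $S$ is a constant; integrating Simons' identity $\tfrac12\Delta S=|\nabla A|^{2}+S(n-S)$ over the closed, non-totally-geodesic $M$ forces $\int_{M}|\nabla A|^{2}=S(S-n){\rm Vol}(M)\geq 0$, hence $S\geq n$, with equality only in the isoparametric (Clifford-torus) case. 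Next, the restrictions to $M$ of the linear functions $\langle p,\cdot\rangle$ satisfy $\Delta_{M}\langle p,u\rangle=-n\langle p,u\rangle$, $\nabla_{M}\langle p,u\rangle=p^{\top}$, and---by a Codazzi computation using minimality and $S$ constant---$\Delta_{M}\langle p,\nu\rangle=-S\langle p,\nu\rangle$, $\nabla_{M}\langle p,\nu\rangle=-A(p^{\top})$; combined with the pointwise splitting $1=\langle p,u\rangle^{2}+\langle p,\nu\rangle^{2}+|p^{\top}|^{2}$, integration by parts yields, for every unit $p$,
$$\int_{M}|p^{\top}|^{2}=n\int_{M}\langle p,u\rangle^{2},\qquad \int_{M}|A(p^{\top})|^{2}=S\int_{M}\langle p,\nu\rangle^{2},\qquad {\rm Vol}(M)=(n+1)\int_{M}\langle p,u\rangle^{2}+\int_{M}\langle p,\nu\rangle^{2}.$$

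I would then apply the monotonicity formula at a point $p\in M$. Passing to the minimal cone $C(M)\subset\mathbb{R}^{n+2}$---whose density equals $1$ at $p$ and ${\rm Vol}(M)/{\rm Vol}(\mathbb{S}^{n})$ at the vertex---and using the classical Euclidean monotonicity identity (equivalently, Proposition~\ref{prop The monotonicity formula for minimal submanifolds}), one obtains the exact formula
$$
{\rm Vol}(M)-{\rm Vol}(\mathbb{S}^{n})=(n+1)\int_{M}\langle p,\nu(u)\rangle^{2}\,I\!\bigl(\langle u,p\rangle\bigr)\,d{\rm vol}(u),\qquad I(c):=\int_{0}^{\infty}\frac{t^{n}\,dt}{(t^{2}-2ct+1)^{(n+3)/2}},
$$
valid for every $p\in M$. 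A direct substitution gives the two values $I(0)=\tfrac1{n+1}$ and $I(-1)=\tfrac1{(n+1)(n+2)}$, and $I$ is increasing and convex on $[-1,1]$.

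Using only $I\geq I(-1)$, the displayed identity together with $\int_{M}\langle p,\nu\rangle^{2}={\rm Vol}(M)-(n+1)\int_{M}\langle p,u\rangle^{2}$ already give
$$
{\rm Vol}(M)\ \geq\ \frac{n+2}{n+1}\,{\rm Vol}(\mathbb{S}^{n})\ -\ \int_{M}\langle p,u\rangle^{2}\,d{\rm vol},\qquad\forall\,p\in M,
$$
so it remains to absorb the deficit, i.e.\ to show that for a suitable $p\in M$ the excess of $I$ over $I(-1)$ contributes at least $\int_{M}\langle p,u\rangle^{2}$. This excess is concentrated where $\langle u,p\rangle$ is near $1$, i.e.\ where $u$ is near $p$ on $C(M)$: there $I$ blows up like $(1-\langle u,p\rangle)^{-(n+2)/2}$ while $\langle p,\nu(u)\rangle=\tfrac12\langle A_{p}v,v\rangle+O(|v|^{3})$ in tangential coordinates $v$ at $p$, so the relevant integrand is an (integrable) quadratic expression in $A_{p}$; the negative part coming from $\{\langle u,p\rangle<0\}$ is controlled by convexity ($I(c)-I(-1)\geq I'(-1)(1+c)$) together with the moment identities $\int_{M}\langle p,\nu\rangle^{2}\langle u,p\rangle=-\tfrac{n+2}{2}\int_{M}\langle p,u\rangle^{3}$ and $(2S-n)\int_{M}\langle u,p\rangle\langle p,\nu\rangle^{2}=2\int_{M}\langle u,p\rangle|A(p^{\top})|^{2}$.

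This is precisely where the constancy of the third mean curvature---equivalently of $\tr A^{3}$---or the Integral-Einstein condition (Definition~\ref{defintion integral Eins manifold}) is used: together with $S$ constant it pins down the curvature integrals above (for the Clifford tori this reflects the $O(k+1)\times O(n-k+1)$-symmetry, e.g.\ $\int_{M}\langle p,u\rangle^{3}=0$ and $\int_{M}\langle u,p\rangle|A(p^{\top})|^{2}=0$), so the required estimate closes and one concludes ${\rm Vol}(M)\geq\tfrac{n+2}{n+1}{\rm Vol}(\mathbb{S}^{n})$. I expect this final step to be the main obstacle: the crude bound $I\geq I(-1)$ loses exactly a term of the size one needs, so the argument must genuinely use the blow-up of $I$ near $u=p$---that is, the curvature of $M$---and it is the constant-scalar-curvature / constant-$H_{3}$ (or Integral-Einstein) hypothesis, not merely ``non-totally-geodesic'', that makes this quantitative. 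Equality, if attained, would be traced through Simons' identity and the monotonicity formula back to an isoparametric $M$.
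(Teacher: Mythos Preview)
Your cone-monotonicity identity
\[
{\rm Vol}(M)-{\rm Vol}(\mathbb{S}^{n})=(n+1)\int_{M}\langle p,\nu(u)\rangle^{2}\,I\bigl(\langle u,p\rangle\bigr)\,d{\rm vol}(u)
\]
is correct, and so is the deficit computation
\(
{\rm Vol}(M)\geq\frac{n+2}{n+1}{\rm Vol}(\mathbb{S}^{n})-\int_{M}\langle p,u\rangle^{2}
\)
coming from $I\geq I(-1)$. The gap is that you never actually close this deficit. The last paragraph asserts that the blow-up of $I$ near $u=p$, together with the moment identities and the constant-$H_{3}$/IE hypothesis, ``pins down the curvature integrals \ldots\ so the required estimate closes,'' but no concrete inequality is produced. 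Even plugging in the IE identity $\int_{M}\langle p,u\rangle^{2}=\frac{1}{n+2}{\rm Vol}(M)$ into your crude bound only gives ${\rm Vol}(M)\geq\frac{(n+2)^{2}}{(n+1)(n+3)}{\rm Vol}(\mathbb{S}^{n})$, which is strictly weaker. Extracting the extra $\int_{M}\langle p,u\rangle^{2}$ from the local contribution near $u=p$ would require a quantitative estimate on $\int_{M}\langle p,\nu\rangle^{2}\bigl(I(\langle u,p\rangle)-I(-1)\bigr)$ that you have not supplied, and it is not clear that your third-moment identities suffice for this.

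The paper's argument avoids this difficulty entirely and is much shorter. It uses the curvature hypothesis only through the single scalar consequence (from \cite{Ge Li 2020}) that IE is equivalent to $\int_{M}\varphi_{a}^{2}=\frac{1}{n+2}{\rm Vol}(M)$ for \emph{every} $a\in\mathbb{S}^{n+1}$; constant $H_{3}$ with $S>n$ implies IE because $\varphi_{a}$ and $\psi_{a}$ are eigenfunctions for distinct eigenvalues $n$ and $S$, hence orthogonal, which kills $\int_{M}\varphi_{a}\psi_{a}f_{3}$. The geometric input is then Lemma~\ref{lemma square of varphi}: by Frankel's theorem (Lemma~\ref{lemma GENERALIZED HADAMARD Thm  for immersion}) one can choose $a\in f(M)$ with $-a\in f(M)$ as well, so that $\xi(a)+\xi(-a)\geq 2$, and the spherical monotonicity of Proposition~\ref{prop The monotonicity formula for minimal submanifolds} applied to both hemispheres yields $\int_{M}\varphi_{a}^{2}\geq\frac{1}{n+1}{\rm Vol}(\mathbb{S}^{n})$. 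Combining the two lines gives $\frac{1}{n+2}{\rm Vol}(M)\geq\frac{1}{n+1}{\rm Vol}(\mathbb{S}^{n})$ immediately (the case $S\leq n$ being handled by Simons/Chern--do~Carmo--Kobayashi). The two ingredients you are missing are precisely the clean IE identity for $\int_{M}\varphi_{a}^{2}$ and the Frankel trick of picking a point whose antipode also lies on $M$; with those in hand there is no deficit to absorb at all.
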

 \begin{proof}
Theorem \ref{thm-IE-hypers-charact}  has shown that for a non-totally-geodesic closed minimal immersed  hypersurface $M^n$  in  $\mathbb{S}^{n+1}$ with constant scalar curvature,
 \emph{$M^n$} is IE if and only if one of the following equivalent conditions holds:
 \begin{itemize}
 \item
$
 \int_{M}\varphi_a^2=\frac{1}{n+2}{\rm Vol }(M^n)$ for all  $a \in \mathbb{S}^{n+1};
$
 \item
 $
 \int_{M}\psi_a^2=\frac{1}{n+2}{\rm Vol }(M^n)$ for all  $ a \in \mathbb{S}^{n+1};
 $
 \item
 $
 \int_{M}\varphi_a^2=\int_{M}\psi_a^2 $ for all  $a \in \mathbb{S}^{n+1};
 $
 \item
 $
 \int_{M}\varphi_a \psi_a f_3=0 $ for all  $ a \in \mathbb{S}^{n+1},
 $
  where $f_3={\rm Tr}(A^3)=3\binom{n}{3}H_3$, $A$ is the shape operator with respect to the
   unit normal vector field $\nu$  and $H_3$ is the third mean curvature.
 \end{itemize}
If further $M$ has the constant squared length of second fundamental form  $S:=|A|^2>n$ and has constant third mean curvature $H_3$, then by the fourth condition above, $M$ is IE (cf. \cite{Ge Li 2020}). This is because $\varphi_a$ and $\psi_a$ are eigenfunctions of eigenvalues $n$ and $S$ respectively, and thus they are orthogonal.
 Hence,  Lemma \ref{lemma square of varphi} and the first condition imply that for these IE hypersurfaces, there exists a unit vector $a \in f(M)$ such that
  $$
  {\rm Vol}(M^n)
  =\left( n+2\right) \int_{M}\varphi_a^2
 \geq
 \frac{n+2}{n+1}
  {\rm Vol}(\mathbb{S}^{n}).
  $$
On the other hand, Simons' inequality \cite{Simons68} shows that if $0\leq S\leq n$, then either  $S\equiv0$ or $S\equiv n$  on $M^n$.
The case of  $S\equiv n$   was characterized by Chern-do Carmo-Kobayashi \cite{Chern do Carmo Kobayashi 1970} and Lawson \cite{Lawson 1969}  independently:  the Clifford torus $M_{k,n-k}$ $\left(1 \leq k \leq n-1\right) $ are the only closed minimal hypersurfaces in $\mathbb{S}^{n+1}$  with
   $S \equiv n$. It is easy to verify that ${\rm Vol}(M_{k,n-k})\geq \frac{n+2}{n+1}  {\rm Vol}(\mathbb{S}^{n})$.
 \end{proof}
 Without assuming constant scalar curvature, i.e.,  $S\not\equiv Constant$, we are able to obtain Theorem \ref{theorem  gap  results of volume for hypersurface  non-constant S antipodal map} and  Theorem \ref{theorem  gap  results of volume for hypersurface  non-constant S}.

 Suppose
   $$ S_{\max}=\sup_{p\in M^n}S(p), \  \  S_{\min}=\inf_{p\in M^n}S(p), \  \
   C(n,S)=\max\{\theta_1,\theta_2\},$$ where
 $$
 \theta_1=\frac{\int_{M}S}
 {2nS_{\max} {\rm Vol }\left( M^n \right) },\ \
 \theta_2=\frac{n}{4n^2-3n+1}
  \frac{
  \left( {\int_{M} }S\right) ^2}
 { {\rm Vol }\left( M^n \right) \int_{M} S^2}.
 $$

 \begin{lem} \cite{Ge Li 2020} \label{lem Volume estimation of minimal hypersurface}
 Let $ M^n$ be a closed  minimal hypersurface  in $\mathbb{S}^{n+1}$.
 \begin{itemize}
 \item[(i)]
 If $ S\not\equiv0$, then
 $$
 \frac{\int_{M}S}
 {2n S_{\max}}
 \leq
 \inf_{a\in \mathbb{S}^{n+1}}\int_{M} \varphi^2_a.
 $$
 The equality holds if and only if $ S\equiv n$ and $M$  is the minimal Clifford torus $S^{1}(\sqrt{\frac{1}{n}})\times S^{n-1}(\sqrt{\frac{n-1}{n}})$.
 \item[(ii)]
 $$
 \frac{n}{4n^2-3n+1} 
 \left( {\int_{M} }S\right) ^2
 \leq \int_{M} S^2
 \inf_{a\in \mathbb{S}^{n+1}}\int_{M} \varphi^2_a.
 $$
 The equality holds if and only if $M$ is an equator.
 \end{itemize}
 \end{lem}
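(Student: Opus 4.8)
The plan is to prove, for \emph{every} unit vector $a\in\mathbb{S}^{n+1}$, the two lower bounds $\int_M\varphi_a^2\ge\frac{\int_M S}{2nS_{\max}}$ and $\int_M\varphi_a^2\ge\frac{n}{4n^2-3n+1}\frac{(\int_M S)^2}{\int_M S^2}$, and then pass to the infimum. Besides $\varphi_a$ I would use the normal height function $\psi_a=\langle\nu,a\rangle$ together with three standard facts for a minimal hypersurface $M^n\subset\mathbb{S}^{n+1}\subset\mathbb{R}^{n+2}$: the orthogonal decomposition $a=a^{\rm T}+\varphi_a f+\psi_a\nu$ in $\mathbb{R}^{n+2}$, which gives the pointwise identity $|a^{\rm T}|^2+\varphi_a^2+\psi_a^2=1$; the equations $\Delta\varphi_a=-n\varphi_a$ (Proposition~\ref{prop funda}) and $\Delta\psi_a=-S\psi_a$, the second a standard Simons-type identity requiring no hypothesis on $S$; and the Weingarten relation $\nabla\psi_a=-A(a^{\rm T})$. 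Integrating $\Delta\varphi_a^2=-2n\varphi_a^2+2|a^{\rm T}|^2$ and $\Delta\psi_a^2=-2S\psi_a^2+2|A(a^{\rm T})|^2$ over the closed $M$ would yield the working identities $\int_M|a^{\rm T}|^2=n\int_M\varphi_a^2$ and $\int_M|A(a^{\rm T})|^2=\int_M S\,\psi_a^2$, the engine of both parts.

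The sharp pointwise input I would isolate first is a consequence of minimality: since $\operatorname{tr}A=0$, Cauchy--Schwarz applied to the $n-1$ principal curvatures other than the largest one (whose sum is $-\lambda_{\max}$) gives $\lambda_{\max}^2\le\frac{n-1}{n}S$, hence $|A(v)|^2\le\frac{n-1}{n}S\,|v|^2$ for all tangent $v$, with equality only if $v$ lies in a simple top eigenline and the remaining principal curvatures coincide. For part~(i) I would split $\int_M S=\int_M S|a^{\rm T}|^2+\int_M S\varphi_a^2+\int_M S\psi_a^2$, replace the last term via $\int_M S\psi_a^2=\int_M|A(a^{\rm T})|^2\le\frac{n-1}{n}\int_M S|a^{\rm T}|^2$, and then use $S\le S_{\max}$ and $\int_M|a^{\rm T}|^2=n\int_M\varphi_a^2$ to get
\[
\int_M S\le\frac{2n-1}{n}\int_M S|a^{\rm T}|^2+\int_M S\varphi_a^2\le(2n-1)S_{\max}\int_M\varphi_a^2+S_{\max}\int_M\varphi_a^2=2nS_{\max}\int_M\varphi_a^2 ,
\]
which is (i). Equality would force $S\equiv S_{\max}$ wherever $\psi_a^2<1$; the locus $\{\psi_a^2\equiv1\}$ is totally geodesic, so $S\not\equiv0$ and continuity make $S$ a positive constant, and then $A$ has exactly the principal curvatures $\lambda_{\max}$ (multiplicity $1$) and $-\lambda_{\max}/(n-1)$ (multiplicity $n-1$); the classification of isoparametric hypersurfaces with two principal curvatures, together with minimality fixing the radii, identifies $M$ with $S^1(\sqrt{1/n})\times S^{n-1}(\sqrt{(n-1)/n})$, on which $S\equiv n$.

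For part~(ii) I would keep $S$ rather than bounding it by $S_{\max}$ and apply Cauchy--Schwarz against $\int_M S^2$. With $h_a:=\frac{2n-1}{n}|a^{\rm T}|^2+\varphi_a^2$, the same decomposition gives $\int_M S\le\int_M S\,h_a\le(\int_M S^2)^{1/2}(\int_M h_a^2)^{1/2}$, so the task reduces to a sharp estimate of $\int_M h_a^2$. Expanding the square, eliminating $\int_M|a^{\rm T}|^4$ and $\int_M\varphi_a^4$ by means of $|a^{\rm T}|^2=1-\varphi_a^2-\psi_a^2$, and using $\int_M|a^{\rm T}|^2=n\int_M\varphi_a^2$ should produce $\int_M h_a^2$ equal to $\frac{4n^2-3n+1}{n}\int_M\varphi_a^2$ minus a positive combination of $\int_M|a^{\rm T}|^2\varphi_a^2$, $\int_M|a^{\rm T}|^2\psi_a^2$ and $\int_M\varphi_a^2\psi_a^2$, hence $\int_M h_a^2\le\frac{4n^2-3n+1}{n}\int_M\varphi_a^2$; combining with the Cauchy--Schwarz step gives $\frac{n}{4n^2-3n+1}(\int_M S)^2\le\int_M S^2\int_M\varphi_a^2$. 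Equality would force those three correction integrals to vanish, i.e.\ at each point exactly one of $|a^{\rm T}|,|\varphi_a|,|\psi_a|$ equals $1$; since $\{|\varphi_a|=1\}$ and $\{|a^{\rm T}|=1\}$ cannot fill a hypersurface, connectedness forces $\psi_a\equiv\pm1$, i.e.\ $M$ is a great hypersphere.

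The main obstacle I anticipate is not the bookkeeping itself but locating the two sharp inputs that make it close: the trace-free eigenvalue bound $\lambda_{\max}^2\le\frac{n-1}{n}S$ (rather than the naive $\le S$) and the exact cancellations in $\int_M h_a^2$ forced by $|a^{\rm T}|^2+\varphi_a^2+\psi_a^2=1$ (rather than the naive $|a^{\rm T}|^4\le|a^{\rm T}|^2$) — these are precisely what promote the routine constants to the sharp $2n$ and $\frac{4n^2-3n+1}{n}$, which are attained on the Clifford torus $M_{1,n-1}$ and on the equator respectively. Once these are in hand, the remaining steps — verifying $\Delta\psi_a=-S\psi_a$ and carrying out the equality analysis via continuity and the two-principal-curvature classification — are routine.
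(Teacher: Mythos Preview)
The paper does not prove this lemma: it is quoted verbatim from \cite{Ge Li 2020} and used as a black box in the proof of Theorem~\ref{theorem  gap  results of volume for hypersurface  non-constant S}. There is therefore no in-paper argument to compare your proposal against.

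That said, your outline is a correct proof. The two identities $\int_M|a^{\rm T}|^2=n\int_M\varphi_a^2$ and $\int_M|A(a^{\rm T})|^2=\int_M S\psi_a^2$ follow exactly as you say from integrating $\Delta\varphi_a^2$ and $\Delta\psi_a^2$, and the sharp trace-free bound $|A v|^2\le\frac{n-1}{n}S\,|v|^2$ is the right input. Your chain for (i) is clean and the constant $2n$ drops out as written. For (ii), the computation of $\int_M h_a^2$ indeed yields
\[
\int_M h_a^2=\frac{4n^2-3n+1}{n}\int_M\varphi_a^2-\frac{(2n-1)^2}{n^2}\int_M|a^{\rm T}|^2\psi_a^2-\frac{(n-1)^2}{n^2}\int_M|a^{\rm T}|^2\varphi_a^2-\int_M\varphi_a^2\psi_a^2,
\]
so your claimed inequality with nonnegative correction terms is exactly right. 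One small point to tighten in the equality discussion: for (i) you should note that the infimum over $a\in\mathbb{S}^{n+1}$ is attained (the map $a\mapsto\int_M\varphi_a^2$ is a quadratic form, hence continuous on a compact set), so that the pointwise equality conditions are available at the minimizing $a$; and for (ii) you should also record that equality in the Cauchy--Schwarz step forces $S$ to be a constant multiple of $h_a$, which together with the vanishing of the correction terms quickly gives $S\equiv0$.
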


  \begin{thm}\label{theorem  gap  results of volume for hypersurface  non-constant S antipodal map}
   Let $f: M^n\rightarrow \mathbb{S}^{n+1}$ be a non-totally-geodesic closed minimal  immersed  hypersurface. If $M$ is invariant under the antipodal map, then
   $$
   {\rm Vol}(M^n)
  \geq
  \frac{1}{1-C(n,S)}
   {\rm Vol}(\mathbb{S}^{n})\geq
  \frac{2nS_{\max}}{2nS_{\max}-S_{\min}}
   {\rm Vol}(\mathbb{S}^{n}).
   $$
   In particular, if $S$ is constant, then
   $C(n,S)=\frac{1}{2n}$.
    \end{thm}
 \begin{proof}
 By Proposition \ref{prop funda} and $\Delta \varphi_a^2=-2n\varphi_a^2+2|a^{\rm T}|^2$, one has
 \begin{equation}\label{equation nvarphia2 aT}
n\int_
{M }
\varphi_a^2=
\int_{M }
|a^{\rm T}|^2.
\end{equation}
Note that $|a^{\rm T}|^2+\varphi_a^2+\psi_a^2=1$ and (\ref{equation square of varphia more that volSn}) holds for  all $a \in f(M)$ when $f(M)$ is symmetric about the origin. Then by 
(\ref{equation square of varphia more that volSn})  and (\ref{equation nvarphia2 aT}), we have
 \begin{equation}\label{equation psi S M}
\int_M\left( 1-\psi_a^2\right)
\geq
{\rm Vol}(\mathbb{S}^{n}),
\end{equation}
for  all $a \in f(M)$. Integrating $a=f(y)$ over $y\in M$ on both sides of (\ref{equation psi S M}), we have
 \begin{equation}\label{equation varphiMS2}
{\rm Vol}(M^n)-{\rm Vol}(\mathbb{S}^{n})
\geq
\frac{
\int_{x\in M}\int_{y\in M}
\langle \nu_x, f(y)\rangle^2}{{\rm Vol}(M^n)}
\geq
 \inf_{a\in \mathbb{S}^{n+1}}\int_{M} \varphi^2_a.
\end{equation}
By Lemma \ref{lem Volume estimation of minimal hypersurface}, we have
\begin{equation}\label{equation varphiMS2Lower}
 \inf_{a\in \mathbb{S}^{n+1}}\int_{M} \varphi^2_a \geq C(n,S){\rm Vol}(M^n)
 \geq
 \frac{\int_{M}S}
  {2n S_{\max}}\geq
  \frac{S_{\min}}
    {2n S_{\max}}{\rm Vol}(M^n).
\end{equation}
Combining (\ref{equation varphiMS2}) and (\ref{equation varphiMS2Lower}) completes the proof.
\end{proof}
As applications, we obtain the following rigidity results.
\begin{cor}\label{corollary gap  results of volume for hypersurface  non-constant SUX  antipodal map}
  Let $M^n$ be a closed minimal  immersed  hypersurface  in  $\mathbb{S}^{n+1}$ which is invariant under the antipodal map. For any
  $\delta\geq 0$, if
$n\leq S\leq n+\delta,$ then
$$
{\rm Vol}(M^n)
\geq\frac{2\left(n+\delta\right)  }{2\left(n+\delta\right) -1 }  {\rm Vol}(\mathbb{S}^{n}).
$$
   \end{cor}
\begin{proof}
Due to $n\leq S\leq n+\delta$, we have
$$\frac{S_{\min}}
    {S_{\max}}\geq
    \frac{n}{n+\delta}.
    $$
By Theorem \ref{theorem  gap  results of volume for hypersurface  non-constant S}, one has
  $$
  {\rm Vol}(M^n)
\geq
  \frac{2nS_{\max}}{2nS_{\max}-S_{\min}}
  {\rm Vol}(\mathbb{S}^{n})\geq \frac{2\left(n+\delta\right)  }{2\left(n+\delta\right) -1 }  {\rm Vol}(\mathbb{S}^{n}).$$
  \end{proof}


  \begin{cor}\label{corollary gap  results of volume for hypersurface  non-constant S antipodal}
  Let $M^n$ be a closed minimal  immersed  hypersurface  in  $\mathbb{S}^{n+1}$ which is invariant under the antipodal map. For any $ \delta\leq\frac{3}{8}n$, if 
 the following conditions are satisfied:
  \begin{itemize}
  \item[(i)]  $S\leq n+\delta\leq  \frac{11}{8}n,$
  \item[(ii)]  $  {\rm Vol}(M^n)
 \leq\frac{3\left( 4n^2-3n+1\right) }{3(4n^2-4n+1)+8\delta }  {\rm Vol}(\mathbb{S}^{n}),$
 \end{itemize}
 then   $M$ is totally geodesic.
   \end{cor}

\begin{proof}
Without loss of generality, we suppose   $M^n$ is a non-totally geodesic closed minimal  embedded  hypersurface  in  $\mathbb{S}^{n+1}$ because of  Corollary \ref{thm 2 gap  results of volume in the introduction must be embedded}. Besides, if $S$ is constant, then a contradiction follows directly from Theorem \ref{theorem  gap  results of volume for hypersurface  non-constant S antipodal map}.
Let $h$
denote     the second fundamental
form of  hypersurface with respect to the unit normal vector field  $\nu$.
If $\left\lbrace \omega_1, \omega_2, \cdots, \omega_n\right\rbrace$ is a local orthonormal coframe field, then $h$ can be written as
$$
h=\sum_{i,j}h_{ij}\omega_i\otimes\omega_j.
$$
The covariant derivative $\nabla h$ with components $h_{ijk}$ is given by
$$
\sum_{k}h_{ijk}\omega_k=dh_{ij}+\sum_{k}h_{kj}\omega_{ik}+\sum_{k}h_{ik}\omega_{jk},
$$
and  $\left\lbrace \omega_{ij}\right\rbrace $
are the connection forms of $M$ with respect to  $\left\lbrace \omega_1, \omega_2, \cdots, \omega_n\right\rbrace$, which  satisfy the following structure equations:
$$
d\omega_i=-\sum_{j}\omega_{ij}\wedge\omega_j,\ \  \omega_{ij}+\omega_{ji}=0,
$$
$$
d\omega_{ij}=-\sum_{k}\omega_{ik}\wedge\omega_{kj}+\frac{1}{2}\sum_{k,l}R_{ijkl}\omega_{k}\wedge\omega_{l},
$$
where $\{R_{ijkl}\}$ are  the coefficients of the Riemannian curvature tensor on $M$.  
Hence
$$
S=\sum_{i,j}h_{ij}^2, \quad
|\nabla h|^2=\sum_{i,j,k}h_{ijk}^2.
$$
By the Cauchy-Schwarz inequality, one has
\begin{equation}\label{equation Sh}
|\nabla S|^2=4\sum_{k}\left( \sum_{i,j}h_{ij}h_{ijk}\right) ^2\leq
4\sum_{k}\left( \sum_{i,j}h_{ij}^2\sum_{i,j}h_{ijk}^2\right) =4S|\nabla h|^2.
\end{equation}
By Simons' identity \cite{Simons68},  we have \begin{equation}\label{equation Simons equation 1}
 \frac{1}{2}\Delta S=|\nabla h|^2+S(n-S).
\end{equation}
Due to (\ref{equation Simons equation 1}) and
$$
\frac{1}{2}\Delta S^2=S\Delta S+|\nabla S|^2,
$$
we obtain
\begin{equation}\label{equation Simons equation S2}
\int_{M} |\nabla S|^2=2\int_{M}\left( S^2(S-n)-S|\nabla h|^2\right).
\end{equation}
By (\ref{equation Sh}) and (\ref{equation Simons equation S2}), one has
$\int_{M}  S^2(S-n)\leq  3\int_{M}  S|\nabla h|^2$ and
\begin{equation}\label{equation SS}
\int_{M} |\nabla S|^2\leq \frac{4}{3}\int_{M}  S^2(S-n)
\leq \frac{4}{3}(S_{\max}-n)\int_{M}  S^2.
\end{equation}
Since  $S$ is not constant,  using Rayleigh's formula, one has
\begin{equation}\label{equation SX}
\int_{M} |\nabla S|^2\geq  {\lambda_1(M)}\left( \int_{M}S^2-
\frac{\left( \int_{M}S\right) ^2}
{{\rm Vol}(M^n)}\right),
\end{equation}
where $\lambda_1(M)$ is  the first positive eigenvalue of the Laplacian. In addition,
Choi and Wang \cite{Choi Wang 1983}  proved that  $\lambda_1(M)\geq n/2$.
A careful argument (see \cite[ Theorem 5.1]{Brendle S 2013 survey of recent results}) showed that the strict inequality holds, i.e., $\lambda_1 (M)> n/2$.
By (\ref{equation SS}) and (\ref{equation SX}), we obtain
$$
\frac{4}{3}(S_{\max}-n)\int_{M}  S^2>
 {\frac{n}{2}}\left( \int_{M}S^2-
\frac{\left( \int_{M}S\right) ^2}
{{\rm Vol}(M^n)}\right).
$$
Hence
$$
\frac{\left( \int_{M}S\right) ^2}
{ {\rm Vol}(M^n)\int_{M}S^2}>1-\frac{8}{3n}
 \left(S_{\max}-n \right) \geq
 \frac{3n-8\delta }{3n}\geq 0,
$$
and
\begin{equation}\label{equation CnS thera2 lower bound}
C(n,S)\geq
\theta_2=\frac{n}{4n^2-3n+1}
 \frac{
 \left( {\int_{M} }S\right) ^2}
{ {\rm Vol }\left( M^n \right) \int_{M} S^2}
>\frac{3n-8\delta }{3\left( 4n^2-3n+1\right) }.
\end{equation}
By Theorem \ref{theorem  gap  results of volume for hypersurface  non-constant S}, we get
  $$
  {\rm Vol}(M^n)
>\frac{3\left( 4n^2-3n+1\right) }{3(4n^2-4n+1)+8\delta  }
  {\rm Vol}(\mathbb{S}^{n}),$$
which is  a contradiction to the assumption of volume.
\end{proof}
\begin{rem}
The following rigidity result is well known (cf. \cite{Qi Ding  Y.L. Xin. 2011,Li Lei Hongwei Xu  Zhiyuan Xu 2021,Peng and Terng1 1983}, etc.):\\
  Let $M^n$ be a closed minimal  immersed  hypersurface  in  $\mathbb{S}^{n+1}$ with  $n\leq S\leq n+\delta$. If $\delta \leq  \frac{n}{18}$,
   then $S\equiv n$ and $M^n$ is a Clifford torus.

In fact, due to some counterexamples of Otsuki \cite{Otsuki 1970},
  the condition $S\geq n$ is essential in the pinching result above.
    Compared with Corollary \ref{corollary gap  results of volume for hypersurface  non-constant S antipodal},  we have a larger pinching constant $\frac{3}{8}n$ and do not need $S\geq n$, but we need to limit the symmetry and volume.
\end{rem}


If symmetry is not assumed, we have the following results.
  \begin{thm}\label{theorem  gap  results of volume for hypersurface  non-constant S}
   Let $f: M^n\rightarrow \mathbb{S}^{n+1}$ be a non-totally-geodesic closed minimal  immersed  hypersurface. Then
   $$
   {\rm Vol}(M^n)
  \geq
  \frac{n+2}{n+2-C(n,S)}
   {\rm Vol}(\mathbb{S}^{n})\geq
  \frac{2n(n+2)S_{\max}}{2n(n+2)S_{\max}-S_{\min}}
   {\rm Vol}(\mathbb{S}^{n}).
   $$
   In particular, if $S$ is constant, then
   $C(n,S)=\frac{1}{2n}$.
    \end{thm}
 \begin{proof}
 The proof is similar to the proof of Theorem \ref{theorem  gap  results of volume for hypersurface  non-constant S antipodal map}.
 By Corollary \ref{theorem  gap  results of volume nonembedded hypersurface}, we have
 \begin{equation} \label{equation gap  results of volume embedded hypersurface}
   {\rm Vol}(M^n) 
   \geq 
   {\rm Vol}(\mathbb{S}^{n})+
    \frac{1}{n+2}
    \int_{M} \psi_{a}^2,
 \end{equation}    
 for all $a \in f(M)$.
 Integrating $a=f(y)$ over $y\in M$ on both sides of (\ref{equation gap  results of volume embedded hypersurface}), we have
 \begin{equation}\label{equation varphiMS2 new1}
{\rm Vol}(M^n)-{\rm Vol}(\mathbb{S}^{n})
\geq
\frac{
\int_{x\in M}\int_{y\in M}
\langle \nu_x, f(y)\rangle^2}{{(n+2)}{\rm Vol}(M^n)}
\geq
    \frac{1}{n+2}
 \inf_{a\in \mathbb{S}^{n+1}}\int_{M} \varphi^2_a.
\end{equation}
This completes the proof by
combining (\ref{equation varphiMS2 new1}) and (\ref{equation varphiMS2Lower}).
\end{proof}

 As applications, we obtain the following rigidity results.
\begin{cor}\label{corollary gap  results of volume for hypersurface  non-constant SUX}
  Let $M^n$ be a closed minimal  immersed  hypersurface  in  $\mathbb{S}^{n+1}$. For any
  $\delta\geq 0$, if
$n\leq S\leq n+\delta,$ then
$$
{\rm Vol}(M^n)
\geq
\frac{2(n+2)\left(n+\delta\right)  }{2(n+2)\left(n+\delta\right) -1 }  {\rm Vol}(\mathbb{S}^{n}).
$$
   \end{cor}
\begin{proof}
Due to $n\leq S\leq n+\delta$, we have
$$\frac{S_{\min}}
    {S_{\max}}\geq
    \frac{n}{n+\delta}.
    $$
By Theorem \ref{theorem  gap  results of volume for hypersurface  non-constant S}, one has
  $$
  {\rm Vol}(M^n)
\geq
  \frac{2n(n+2)S_{\max}}{2n(n+2)S_{\max}-S_{\min}}
  {\rm Vol}(\mathbb{S}^{n})\geq \frac{2(n+2)\left(n+\delta\right)  }{2(n+2)\left(n+\delta\right) -1 }  {\rm Vol}(\mathbb{S}^{n}).$$
  \end{proof}


  \begin{cor}\label{corollary gap  results of volume for hypersurface  non-constant S new}
  Let $M^n$ be a closed minimal  immersed  hypersurface  in  $\mathbb{S}^{n+1}$. For any $ \delta\leq\frac{3}{8}n$, if 
 the following conditions are satisfied:
  \begin{itemize}
  \item[(i)]  $S\leq n+\delta\leq  \frac{11}{8}n,$
  \item[(ii)]    $
    {\rm Vol}(M^n)
\leq \frac{3(n+2)\left( 4n^2-3n+1\right) }{3(n+2)\left( 4n^2-3n+1\right)+8\delta-3n  }
    {\rm Vol}(\mathbb{S}^{n}),$
 \end{itemize}
 then   $M$ is totally geodesic.
   \end{cor}

\begin{proof}
 The proof is similar to the proof of  Corollary \ref{corollary gap  results of volume for hypersurface  non-constant S antipodal}.
Without loss of generality, we suppose   $M^n$ is a non-totally geodesic closed minimal  embedded  hypersurface  in  $\mathbb{S}^{n+1}$ because of  Corollary \ref{thm 2 gap  results of volume in the introduction must be embedded}. Besides, if $S$ is constant, then a contradiction follows directly from Theorem \ref{theorem  gap  results of volume for hypersurface  non-constant S}.
If $S$ is not constant,
 we have
  $$
  {\rm Vol}(M^n)
>\frac{3(n+2)\left( 4n^2-3n+1\right) }{3(n+2)\left( 4n^2-3n+1\right)+8\delta-3n  }
  {\rm Vol}(\mathbb{S}^{n})$$
  by Theorem \ref{theorem  gap  results of volume for hypersurface  non-constant S} and (\ref{equation CnS thera2 lower bound}), which is
   a contradiction to the assumption of volume.
\end{proof}
\begin{rem}
    Compared with Corollary \ref{corollary gap  results of volume for hypersurface  non-constant S antipodal}, now we do not need the symmetry of hypersurfaces in Corollary \ref{corollary gap  results of volume for hypersurface  non-constant S new}.
\end{rem}

 \begin{proof}[\textbf{Proof of Theorem \ref{theorem  gap  results of volume for hypersurface constant S}}] 
Since $S$ is constant,  it follows from
 Theorem \ref{theorem  gap  results of volume for hypersurface  non-constant S}.
 \end{proof}
 \begin{proof}[\textbf{Proof of   Theorem \ref{theorem  gap  results of volume for hypersurface  constant S f3 embedded}}]
  By  the proof of Theorem \ref{theorem  gap  results of volume for hypersurface  constant S f3}, if $M^n$ has the constant squared length of second fundamental form  $S:=|A|^2>n$ and  constant third mean curvature $H_3$, then   $M^n$ is IE and 
 $$
  \int_{M}\varphi_a^2=\int_{M}\psi_a^2=\frac{1}{n+2}{\rm Vol }(M^n)
  \quad 
  \text{for all }  a \in \mathbb{S}^{n+1}.$$
  Substituting this into Theorem \ref{theorem  gap  results of volume in the m preimage varphi p2} proves the required gap in the case $S>n$.
This completes the proof by a similar discussion in the proof of Theorem \ref{theorem  gap  results of volume for hypersurface  constant S f3} if $0\leq S\leq n$.
 \end{proof}



\section {Volume difference of minimal submanifold in two hemispheres}\label{sect5}
In this section, we present some results of volume difference for the two parts of minimal submanifold in the two hemispheres divided by an equator.
 \begin{lem}\label{lem int varphi a 2 leq varphi a}
   Let $M^n$ be a  closed minimal immersed  submanifold  in  $\mathbb{S}^{N}$. If $M\not\subset \left\lbrace\varphi_a=0\right\rbrace$, then
 	$$\int_{\{ \varphi_a \geq 0\}} \varphi_a^2\leq \frac{{\rm Vol}(\mathbb{B}^{n+1})}{2{\rm Vol}(\mathbb{B}^{n})} 
 	\int_{\{ \varphi_a \geq 0\}}  \varphi_a ,$$
 	    where the equality  holds if and only if $M^n$ is totally geodesic.
 	 \end{lem}
\begin{proof}
By the    co-area formula, (\ref{equation Stokess formula to varphia}) and Proposition \ref{prop The monotonicity formula for minimal submanifolds}, we obtain
$$
\begin{aligned}
\int_{\left\lbrace  \varphi_a \geq 0  \right\rbrace }
|a^{\rm T}|^2
&=
\int_0^1 dt
\int_{\left\lbrace \varphi_a=t\right\rbrace }
{|a^{\rm T}|}
=
\int_0^1 dt
\int_{\left\lbrace \varphi_a\geq t\right\rbrace }
n\varphi_a\\
& \leq \int_0^1 dt
\int_{\left\lbrace \varphi_a\geq 0\right\rbrace }
n\varphi_a(1-t^2)^{\frac{n}{2}}\\
&=\frac{{\rm Vol}(\mathbb{B}^{n+1})}{2{\rm Vol}(\mathbb{B}^{n})}
\int_{\left\lbrace \varphi_a\geq 0\right\rbrace }
n\varphi_a.
\end{aligned}
$$
By $\int_{\left\{\varphi_a \geq  0\right\}} \Delta \varphi_a^2=0$, we have
$$
\int_{\left\{ \varphi_a \geq 0   \right\}} n \varphi_a^2
=\int_{\left\lbrace  \varphi_a \geq 0 \right\rbrace }
|a^{\rm T}|^2
 \leq\frac{{\rm Vol}(\mathbb{B}^{n+1})}{2{\rm Vol}(\mathbb{B}^{n})}
 \int_{\left\lbrace \varphi_a\geq 0\right\rbrace }
 n\varphi_a.
$$
By a similar proof of Theorem \ref{thm  f_1(s)=1+s^2, f_2(s)=s, V(s)=1-s^2}, one has the equality above holds if and only if $M^n$ is totally geodesic.
\end{proof}

 \begin{thm}\label{theorem Volume difference}
 Let $M^n$ be a  closed minimal immersed  submanifold  in  $\mathbb{S}^{N}$. If $M\not\subset \left\lbrace\varphi_a=0\right\rbrace$, then
  \begin{itemize}
 	\item[(i)]
 $\left|  {\rm Vol} \{\varphi_a\geq 0\}  -
  {\rm Vol} \{\varphi_a\leq 0\}\right| 
\leq  {\rm Vol}(M^n)-(n+1) \int_M \varphi_a^2,$
 	\item[(ii)] 
 	$ {\rm Vol} \{\varphi_a\geq 0\} \geq  \frac{n+1}{2} \int_M \varphi_a^2$.
 	 \end{itemize}
  \end{thm}
  \begin{proof}
  By  $\int_{M}\Delta \varphi_{a}=0$,	Corollary \ref{thm  f_1(s)=1, f_2(s)=s, V(s)=1-s^2} and Lemma \ref{lem int varphi a 2 leq varphi a} , we have
  $$
  \int_{\{\varphi_a\geq 0\}}1\geq \frac{{\rm Vol}(\mathbb{S}^n)}{4{\rm Vol}(\mathbb{B}^n)}
  \cdot
 \frac{2{\rm Vol}(\mathbb{B}^{n})}{{\rm Vol}(\mathbb{B}^{n+1})} 
  \int_M \varphi_a^2
  =\frac{n+1}{2}\int_M \varphi_a^2.
  $$
By $\int_{\{\varphi_a\geq 0\}}1+\int_{\{\varphi_a\leq 0\}}1={\rm Vol}(M)$, one has
  $${\rm Vol}(M^n) \geq (n+1) \int_M \varphi_a^2+
  \left|  {\rm Vol} \{\varphi_a\geq 0\}  -
    {\rm Vol} \{\varphi_a\leq 0\}\right|. $$   
  \end{proof}
    By Theorem \ref{theorem  gap  results of volume in the m preimage varphi p2} and Theorem \ref{theorem Volume difference}, one has
     \begin{cor}\label{cor Volume gap of minimal hypersurface by Volume difference}
   Let $M^n$ be a  closed minimal immersed  submanifold  in  $\mathbb{S}^{N}$. If there exists a point $p\in f(M)$ such that its preimage set  consists of $m$ distinct points in $M^n$ and $M\not\subset \left\lbrace\varphi_p=0\right\rbrace$, then
  \begin{equation*} 
    {\rm Vol}(M^n) 
    \geq 
    m{\rm Vol}(\mathbb{S}^{n})+
     \frac{1}{n+2}
     \left|  {\rm Vol} \{\varphi_p\geq 0\}  -
    {\rm Vol} \{\varphi_p\leq 0\}\right| .
  \end{equation*}    
     \end{cor}
     In particular, we have the following corollaries for minimal hypersurfaces by Theorem \ref{theorem Volume difference}.

 \begin{cor}\label{cor Volume of minimal hypersurface partitioning}
 Let $M^n$ be a  closed, non-totally-geodesic, minimal immersed  hypersurface  in  $\mathbb{S}^{n+1}$. Then
  \begin{itemize}
 	\item[(i)]
 $\left|  {\rm Vol} \{\varphi_a\geq 0\}  -
  {\rm Vol} \{\varphi_a\leq 0\}\right| 
\leq  \int_M \psi_a^2,$
 	\item[(ii)] 
 	$ {\rm Vol} \{\varphi_a\geq 0\} \geq  \frac{n+1}{2} C(n, S) {\rm Vol}(M^n)$.  
 	\end{itemize}
 In particular, if $S \equiv$ Constant, then $C(n, S)=\frac{1}{2 n}$.
 \end{cor}
 \begin{cor}\label{cor Volume of minimal S constant IE hypersurface partitioning}
 Let $M^n$ be a  closed, non-totally-geodesic, minimal immersed  hypersurface with constant scalar curvature    in  $\mathbb{S}^{n+1}$. 
  If   the third mean curvature is constant (or $M^n$ is Integral-Einstein), then 
  $ {\rm Vol} \{\varphi_a\geq 0\} \geq  \frac{n+1}{2(n+2)}  {\rm Vol}(M^n)$. 
   \end{cor}
   \begin{rem}\label{rem Volume of minimal S constant IE hypersurface partitioning}
   In Corollary \ref{cor Volume of minimal S constant IE hypersurface partitioning},
  $$\lim_{n \to {+\infty }}
  \frac{{\rm Vol} \{\varphi_a\geq 0\} }{ {\rm Vol}(M^n)}=\frac{1}{2}   \quad 
    \text{for all }  a \in \mathbb{S}^{n+1}.$$
    This reflects that the volume of the minimal hypersurface with constant scalar curvature   and constant   third mean curvature (or $M^n$ is Integral-Einstein) in each hemisphere is almost the same    for sufficiently large $n$.
\end{rem}
Based on  Corollary \ref{cor Volume of minimal hypersurface partitioning}, Corollary \ref{cor Volume of minimal S constant IE hypersurface partitioning} and Remark \ref{rem Volume of minimal S constant IE hypersurface partitioning}, we propose the following conjectures.
\begin{conj}\label{conj  hypersurface 0.5 volume with constant scalar curvature}
 Let $M^n$ be a  closed  minimal immersed  hypersurface with constant scalar curvature  in  $\mathbb{S}^{n+1}$. If $M\not\subset \left\lbrace\varphi_a=0\right\rbrace$, then
  	$$ {\rm Vol} \{\varphi_a\geq 0\} = \frac{1}{2} {\rm Vol}(M^n).$$ 
\end{conj}
\begin{conj}\label{conj  hypersurface 0.5 volume}
 Let $M^n$ be a  closed  minimal immersed  hypersurface   in  $\mathbb{S}^{n+1}$. If $M\not\subset \left\lbrace\varphi_a=0\right\rbrace$, then
  	$$ {\rm Vol} \{\varphi_a\geq 0\} = \frac{1}{2} {\rm Vol}(M^n).$$ 
\end{conj}
\begin{rem}
Conjecture \ref{conj  hypersurface 0.5 volume with constant scalar curvature} and Conjecture \ref{conj  hypersurface 0.5 volume} are true for all isoparametric hypersurfaces in $\mathbb{S}^{n+1}$, since they are invariant under the antipodal map.
\end{rem}
     \begin{acknow}
     The authors are deeply grateful to the anonymous referee for their valuable comments and careful review. Their thoughtful feedback has been instrumental in strengthening the paper.
     \end{acknow}
\singlespacing
{\noindent \bf  Data Availability Statement} This manuscript has no associated data.
    \singlespacing
{\noindent \bf  Declarations} 
    \singlespacing
    {\noindent\bf Conflict of interest}
On behalf of all authors, the corresponding author states that there is no Conflict of interest.

\end{document}